\documentclass[12pt]{amsart}
\usepackage{cancel}
\usepackage{amsmath,amscd}

\usepackage[utf8]{inputenc}
\usepackage{amsmath,amsfonts}
\usepackage{amsmath,amsfonts,latexsym}
\usepackage{amscd,amssymb,amsmath}
\usepackage{amsfonts}
\usepackage{amsbsy}
\newcounter{tableonly}
\usepackage{subfigure,tikz}
\usepackage[normalem]{ulem}
\usepackage{color}
\usepackage{xcolor}
\usepackage{enumerate}

\usepackage{mathtools}
\usepackage{microtype}

\usepackage{hyperref}

 \topmargin=-0.5cm
\setlength{\textheight}{221mm}
 \setlength{\textwidth}{165mm}
 \evensidemargin=0.2cm \oddsidemargin=0.2cm

\newtheorem{lema}{Lemma}[section]
\newtheorem{teo}[lema]{Theorem}

\newtheorem{rema}[lema]{Remark}
\newtheorem{coro}[lema]{Corollary}
\newtheorem{defi}[lema]{Definition}

\newtheorem{algorithm}[lema]{Algorithm}
\newtheorem{exem}[lema]{Example}

\newtheorem{prop}[lema]{Proposition}

\newcommand{\bea}{\begin{eqnarray*}}
\newcommand{\eea}{\end{eqnarray*}}
\newcommand{\zz}[1]{}

\newtheorem{theorem}[lema]{Theorem}

\newtheorem{corollary}[lema]{Corollary}
\newtheorem{lemma}[lema]{Lemma}

\newcommand{\hbx}{\hfill$\Box$}

\newcommand{\bz}{\mathbb{Z}}

\newcommand{\br}{\mathbb R}
\newcommand{\bh}{\mathbb H}
\newcommand{\bc}{\mathbb C}

\newcommand{\bn}{\mathbb N}

\newcommand{\bq}{\mathbb Q}

\newcommand{\g}{\mathrm g}

\newcommand{\Mod}{\mathrm {Mod}}
 \newcommand{\NN}{{\mathbb{N}}}
 \newcommand{\ZZ}{{\mathbb{Z}}}

 \newcommand{\QQ}{{\mathbb{Q}}}

\def\P{\mathcal{P}}

\newcommand{\reg}{{\rm reg}}

\newcommand{\per}{{\rm Per}}

\newcommand{\cA}{{\mathcal A}}

\newcommand{\cAP}{{\mathcal AP}}
\newcommand{\cU}{{\mathcal U}}
\newcommand{\cD}{{\mathcal D}}

\newcommand{\MPer}{{\rm MPer}}

\parindent 0pt
\parskip 2mm



\begin{document}
\title[Dold coefficients of  surface homeomorphisms]{Dold coefficients of quasi-unipotent  homeomorphisms of orientable surfaces}

\author[G. Graff]{Grzegorz Graff$^*$}
\thanks{Research supported by the National Science Centre, Poland,
within the grant Sheng 1 UMO-2018/30/Q/ST1/00228. The third author was also supported by the Slovenian Research and Innovation Agency program P1-0292 and grant J1-4001.}
\address{$^*$ Faculty of Applied Physics and Mathematics, Gda\'nsk University of
Technology, Narutowicza 11/12, 80-233 Gda\'nsk, Poland}

\email{grzegorz.graff@pg.edu.pl, ORCID: \href{https://orcid.org/0000-0001-5670-5729}{0000-0001-5670-5729} }
\author[W. Marzantowicz]{Wac{\l}aw Marzantowicz$^{**}$}
\address{$^{**}$ \;Faculty of Mathematics and Computer Science, Adam Mickiewicz University of
Pozna{\'n}, ul. Uniwersytetu Pozna{\'n}skiego 4, 61-614 Pozna{\'n}, Poland.}
\email{marzan@amu.edu.pl, ORCID: \href{https://orcid.org/0000-0001-5933-9955}{0000-0001-5933-9955}}

\author[{\L}. P. Michalak]{{\L}ukasz Patryk Michalak$^{**}$\,'\,$^{***}$}
\address{$^{***}$ \;Institute of Mathematics, Physics and Mechanics, Jadranska 19, SI-1000 Ljubljana, Slovenia}
\email{lukasz.michalak@amu.edu.pl, ORCID: \href{https://orcid.org/0009-0002-4821-3809}{0009-0002-4821-3809}}

\subjclass[2010]{Primary 37C25;  Secondary 55M20, 37E30, 37D15}
\keywords{Periodic points, surfaces, Lefschetz numbers, fixed point index, transversal maps, Morse--Smale diffeomorphisms}

\begin{abstract}

The sequence of Dold coefficients $(a_n(f))$ of a self-map $f\colon X \to X$ forms a dual sequence to the sequence of Lefschetz numbers $(L(f^n))$ of iterations of $f$ under the M{\"o}bius inversion formula. The set $\cAP(f) = \{ n \colon a_n(f) \neq 0 \}$ is called the set of algebraic periods of~$f$. Both the set of algebraic periods and sequence of Dold coefficients play an important role in dynamical systems and periodic point theory.

In this work we provide a description of surface homeomorphisms with bounded $(L(f^n))$ (quasi-unipotent maps) in terms of   Dold coefficients.  We also discuss the problem of minimization of the genus of a surface for which one can realize a given set of natural numbers as the set of algebraic periods. Finally, we compute and list  all possible Dold coefficients and algebraic periods for a given orientable surface with small genus and give some geometrical applications of the obtained results.

\end{abstract}
\maketitle

\section{Introduction} \label{sec:Introduction}

This work has been inspired by the problem posed by J. Llibre and V.\,F. Sirvent in \cite{LlSi3, LlSi4}.
The statement of the problem was the following: consider $\Sigma_\g$ an (orientable or non-orientable) closed surface of genus $\g$ and an arbitrary set $\cA \subset \bn$. Is it true that we can always find
a Morse--Smale diffeomorphism $f\colon  \Sigma_\g \to \Sigma_\g$  such that
the set of algebraic periods of $f$ is equal to $\cA$?
We gave recently a positive answer to this question
in \cite{GMMM}. However, a lot of related problems remained still unsolved, in particular in \cite{GMMM} we did not determine the minimal genus among surfaces realizing the set $\cA$.

   In this paper we study more general situation, focusing on a construction  of  preserving orientation homeomorphisms with given sequence of finitely many nonzero Dold coefficients $a_k(f)$ and algebraic periods  $\cAP(f)$ defined below. The necessary condition for such realization  requires $f$ to be quasi-unipotent i.e. the map induced by $f$  on homology has only roots of unity in its spectrum. This class of maps contains also  Morse--Smale diffeomorphisms (cf. \cite{Shub}), but it does not
  have to meet geometric restrictions in terms of Nielsen--Thurston classification provided by da Rocha \cite{Rocha} and described in \cite{GMMM} in this context.
The object of our interest in this paper are the  quantities considered by A. Dold (\cite{Dold}):

 \begin{equation}\label{equ:algebraic periods}
 a_n(f) = \frac{1}{n}\, \sum_{k|n} \mu(n/k) \, L(f^k) \ \ \text{ and } \ \ \cAP(f) =\{ n \in \bn: \, a_n(f) \neq 0\},
 \end{equation}

 where  $L(f^k)$ is the Lefschetz number of $k$th iteration of $f$ and $\mu: \bn \to \{-1,0,1\}$ is the M{\"o}bius function.

  We will denote the sequence of Dold coefficients $(a_k(f))$ by $\cD (f)$ with the set of algebraic periods  $\cAP(f)$  as its support.

 The application of Dold coefficients   in periodic point theory  was first considered in 90s of XX century \cite{Bab-Bog, Llibre} (in the latter paper under the name of {\it periodic Lefschetz numbers}). They constitute a sequence of integers due to so-called Dold congruences \cite{Dold} (cf. also \cite{BGW}) and are coefficients of the periodic expansion of the sequence $(L(f^n))$ (see Section \ref{periodic expansion}). What is more, they are homotopy invariants. Evidently if $\cD(f) \neq \cD(f^\prime)$, then $f$ is not homotopic to $f^\prime$, thus  $\cD (f)$  can be used for identifying different homotopy classes.
 Also, the algebraic periods and Dold coefficients give
an important insights into the structure of periodic points by cross-referencing the information they provide with the local characteristics of periodic points, which are described through fixed point indices at the respective orbits (see Section~\ref{transversal}).
The realization of this line of research was successfully performed for various classes of spaces and their self-maps:  \cite{Duan, Gr, Guirao-Llibre-Qualit,LPR}.

Let us mention here that the related notion to algebraic periods is $\MPer_L(f)$, the so-called {\it minimal set of Lefschetz  periods} considered for transversal maps and expressed in terms of Lefschetz zeta function.  As it was shown in  \cite{GLM} (and \cite{GMMM} in another, purely algebraic way)  the minimal set of Lefschetz  periods coincides with odd algebraic periods: $\MPer_L(f)= \cAP(f) \cap (2\NN -1)$.

The characterization of $\MPer_L(f)$ and its topological consequences was provided for many spaces, such as  $n$-dimensional torus $T^n$ \cite{Guirao-Llibre-T^n}, the sphere $S^n$ \cite{Guirao-Llibre-S2},
 orientable compact surfaces \cite{LlSi1}, non-orientable compact surfaces \cite{LlSi4}, 
 products of $l$-dimensional spheres \cite{Berrizbeitia}, products of spheres of different dimensions \cite{Sir}, spheres and projective spaces \cite{Cufi}.


 The paper is organized in the following way.

Section \ref{Dold-coef} is devoted to a realization of Dold coefficients.
For an orientation-preserving homeomorphism $f\colon S_\g \to S_\g$ the sequence $(a_n(f))$ is determined by the linear symplectic map $H_1(f)\in Sp(2\g, \bz)$. On the other hand, it is known that every $A\in Sp(2\g, \bz)$  is represented as $H_1(f)$ for some $f$.
Thus, our problem of constructing a quasi-unipotent homeomorphism with a given sequence of Dold coefficients reduces to algebraic problem of finding the appropriate quasi-unipotent matrix.
To this end we first adapt a general theorem of the construction of a symplectic matrix corresponding to a palindromic polynomial (Theorem \ref{roots} and Appendix 1) obtaining  realizations of every admissible spectra of symplectic  quasi-unipotent matrices.

Then we provide a relation joining  the multiplicities of primitive  roots of unity in the spectrum with Dold coefficients and genus  (Theorem \ref{theorem:correspondence_Dold_coeff_and_roots_of_unity_and_genus}). In particular the formula (\ref{genus-ak}) enables us to realize any sequence with finitely many nonzero terms as Dold coefficients of some homeomorphism $f$.

In Section \ref{genus-minimal} for the given set $\cA$ of natural numbers we search for an orientable surface of the minimal genus $g$ such that $\cA$ may be realized as the set of algebraic periods of some homeomorphism of $S_\g$.

At the end of algebraic considerations, for the surfaces of small genus we determine the lists of all possible  sets of algebraic periods and Dold coefficients (Section \ref{list}).

In Section \ref{transversal} we provide a geometrical application of our results to the class of transversal maps, proving that the non-vanishing $a_n(f)$ implies the existence of points of minimal period equal to $n$  if $n$ is odd or of minimal period $k\in \{n,\,\frac{n}{2}\}$ if $n$ is even.

In the final Section \ref{discussion} we summarize the obtained results and sketch some directions of the future research.


\section{Periodic expansion}\label{periodic expansion}

Any map $\psi \colon \bn \to \bc$ is called  an {\it arithmetic function}. Formally this is equivalent to the sequence $(a_n)= (\psi(n))$, but it  is more convenient for some considerations thus commonly used in number theory and  often in the referred papers.

Below we include a short description of the notion of periodic expansion of an arithmetic  function $\psi\colon \bn\to  \bc$.
The notion of periodic expansion  was introduced in  \cite{Mar-NP} (see also \cite{Jez-Mar}), and widely applied in \cite{GLM}, \cite{GLN-P}. This concept seems to provide a natural  approach for studying integral invariants of periodic points. In this language any arithmetic function $\psi$, or equivalently a complex valued sequence $(\psi(n))$,  is represented as the series
\begin{equation}\label{formula:k-periodic expansion}
 \psi(n)= \sum_{k=1}^\infty a_k(\psi) \, \reg_k(n)
\end{equation}
of elementary periodic functions
$$
\reg_k(n)= \begin{cases} 0 \; {\text{if}}\; k\nmid n, \cr    k \; {\text{if}}\; k\mid n, \end{cases}
$$
where  the coefficients $a_k(\psi)\in \bc $ are given by the formula
\begin{equation}\label{formula:k-dold_coefficients}
a_k(\psi) = \frac{1}{k}\sum_{d|k} \mu\left(\frac{k}{d}\right)\psi(d).
\end{equation}

In essence, the periodic expansion serves as a discrete analog of the Fourier expansion. Algebraically, it is connected with Ramanujan sums, with its coefficients being instrumental in examining the behavior of the number sequences associated with periodic points (see \cite{BGW}). Here, it simply provides a convenient framework for analyzing the sequence under study.

The notation $\reg_k$ comes from the fact that $\reg_k$ is the character of regular representation of $\bz_k= \bz/ k \bz$ composed with the canonical projection of $\bz$ onto $\bz_k$.
We say that the periodic expansion of $\psi$  is finite if $a_k=0$ for $k> k_0$. Note that if the  periodic expansion is finite then the sequence $(\psi(n))$ given by (\ref{formula:k-periodic expansion}) is periodic with the period equal to ${\rm LCM}(\{k: a_k(\psi)\neq 0\})$ (cf. \cite{Jez-Mar}).

We say that a sequence $(\psi(n)))$ satisfies Dold congruences if for all $n\in \bn$
\begin{equation}\label{equ:Dold congruences}
 \sum_{k|n} \mu\left(\frac{n}{k}\right) \, \psi(k) \,\equiv 0\, \ \ ({\rm mod}\, n),
  \end{equation}

It follows directly from the definition of $\reg_k(n)$ and the M{\"o}bius inversion formula that every arithmetic function $\psi$ can be written in the form (\ref{formula:k-periodic expansion}), where $a_k(\psi)$ are given by (\ref{formula:k-dold_coefficients})  (cf. \cite[Proposition 3.2.7]{Jez-Mar}). Moreover, $\psi$ is integral valued and satisfies  Dold congruences (\ref{equ:Dold congruences}) if and only if  $a_n(\psi) \in \bz$  for every $n \in \bn $ (and then we call $a_n(\psi)$ Dold coefficients).

\zz{Indeed, denote by $H_{od}(X;\bq)= {\underset{i\cong 1 \mod 2}{\,\oplus\,}} \, H_i(X;\bq)$,  analogously  $H_{ev}(X;\bq)= {\underset{i\cong 0 \mod 2}{\,\oplus\,}} \, H_i(X;\bq)$ the subspaces of $H_*(X;\bq)$, and consequently $H_{od}(f)$ and $ H_{ev}(f)$  the linear maps induced by $f$ on these  subspaces. Then (cf. \cite[(3.1.27)]{Jez-Mar})}

\zz{We have the following characterization of this phenomena (cf. \cite{Bab-Bog} and \cite[Thm 3.1.46]{Jez-Mar}).
\begin{teo}\label{thm:bounded sequences expansion}
Let $L = \{l_n\}$ be a sequence of integers. The following
conditions are equivalent:
\begin{itemize}
\item[i)]{The sequence $a(L)$ of $k$-periodic coefficients of $L$ is integral, i.e. $L$ satisfies
the Dold condition, and $\{l_n\}$ is bounded.}
\item[ii)]{$a(L)$ is integral and there exists a natural number $k_0$ such that $a_k = 0$
for $ k \geq  k_0$.}
\item[iii)]{There exists a natural number $l$ such that $l_n = l_{\rm{GCD}(n,l)}$ and $a_k$ is
integral for all $k\mid l$.}
\item[iv)]{There exists a natural number $l$ and integers $a_d$ for all $d\mid l$  such that
$$l_n= {\underset{d\mid l}\sum}\, a_d\,{\underset{i=1}{\overset{d} \sum}}\, \varepsilon_d^{in}\,,\;\;\; n = 1, 2, \, \dots\, ,$$
where $(\varepsilon_d)^d =1$ is a primitive root of unity. }
\zz{\item[v)]{$\zeta(L; z)$ is a rational function with no zeros or poles inside the unit disc.}
\item[vi)]{$S(L; z)$  is  a  rational  function  which  is  equal  to  zero  at  inﬁnity  and
and regular on the unit disc and which has only simple poles with residues
which are integers.}}
\end{itemize}
\end{teo} }

\section{Realization of Dold coeffiecients on orientable surfaces}\label{Dold-coef}


\subsection{Realization of roots of unity as eigenvalues}\label{symplectic_basis}

We start with recalling some basic facts related to homology structure of orientable surfaces. Let us denote by $S_\g$ a closed orientable surface of genus $\g$.

 The standard generators of the first homology group $H_1(S_\g;\mathbb{Z}) \cong \mathbb{Z}^{2\g} = \left<a_1,\ldots,a_\g,b_1,\ldots,b_\g\right>$ form a symplectic basis for the intersection form
	whose matrix in this basis is
		$$ \Omega = \begin{bmatrix}
	0 & I_\g \cr
	-I_\g & 0
	\end{bmatrix}.
	$$
	If $f\colon S_\g \to S_\g$ is an orientation-preserving homeomorphism, it induces the linear map $H_1(f)$ that preserves the intersection form. Thus, if $A$ is the matrix of $H_1(f)$ in the base $\left(a_1,\ldots,a_\g,b_1,\ldots,b_\g\right)$, then
	$$
	\Omega = A^T \Omega A,
	$$
	so $A$ is a symplectic matrix. The group of all symplectic $2\g \times 2\g$ matrices over $\ZZ$ we denote by $Sp(2\g,\ZZ)$.

\begin{defi}\label{quasi}
A rational linear transformation will be called quasi-unipotent if
all their eigenvalues are roots of unity. We will call a continuous
self-map $f$ of a manifold $M$  {\it quasi-unipotent} if the map $H_k(f)$ induced on $k$th homology is quasi-unipotent for every $k$. 
\end{defi}

Let us mention here a well-known fact, that we will use in the forthcoming part of the paper (see e.g. \cite[Fact 2.5]{GMMM}).
\begin{lemma}
\label{fact:bounded_finite_unipotent} For a homeomorphism  $ f \colon \Sigma \to \Sigma$  of a closed surface $\Sigma$ the following three statements are equivalent:
\begin{itemize}
    \item[(1)] $(L(f^n))$  is bounded,
    \item[(2)] $\cAP(f)$ is finite,
    \item[(3)] $H_1(f)\colon H_1(\Sigma; \bz) \to H_1(\Sigma;\bz )$ is quasi-unipotent.\qed
\end{itemize}
\end{lemma}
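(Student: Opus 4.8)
The plan is to route everything through the single integer matrix $A:=H_1(f)$ acting on the free part of $H_1(\Sigma;\bz)$ (equivalently, on $H_1(\Sigma;\bq)$). Since $\Sigma$ is a closed surface, $H_0(f)=\mathrm{id}$ always contributes $1$ to the Lefschetz number, while $H_2(f)$ is multiplication by $\deg f\in\{1,-1\}$ when $\Sigma$ is orientable and $H_2(\Sigma;\bq)=0$ when it is non-orientable; in all cases the joint contribution of $H_0$ and $H_2$ to $L(f^n)$ is some $c_n\in\{0,1,2\}$, so $L(f^n)=c_n-\tr(A^n)$. Hence boundedness of $(L(f^n))$ in (1) is equivalent to boundedness of $(\tr(A^n))_{n\ge1}$. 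Throughout I keep in mind that, $f$ being a homeomorphism, $A\in GL(d,\bz)$ with $d=\dim H_1(\Sigma;\bq)$ (and moreover $A\in Sp(2\g,\bz)$ when $\Sigma=S_\g$); in particular the eigenvalues of $A$ are nonzero algebraic integers, and every Galois conjugate of an eigenvalue of $A$ is again an eigenvalue of $A$.

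Next I would settle the easy equivalences. For $(1)\Rightarrow(2)$: if $|L(f^k)|\le C$ for all $k$, then (\ref{equ:algebraic periods}) gives $|a_n(f)|\le\frac1n\sum_{k\mid n}|L(f^k)|\le C\,d(n)/n$ with $d(n)$ the number of divisors of $n$; since $d(n)/n\to0$ while the $a_n(f)$ are integers by the Dold congruences (Section~\ref{periodic expansion}, \cite{Dold}), we get $a_n(f)=0$ for all large $n$, i.e.\ $\cAP(f)$ is finite. For $(2)\Rightarrow(1)$: if $a_k(f)=0$ for $k>k_0$, then by (\ref{formula:k-periodic expansion}) the sequence $(L(f^n))$ equals the finite sum $\sum_{k\le k_0}a_k(f)\,\reg_k(n)$, which is periodic of period ${\rm LCM}\{k:a_k(f)\neq0\}$ and in particular bounded. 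And $(3)\Rightarrow(1)$ is immediate: if $A$ is quasi-unipotent then $\tr(A^n)=\sum_i\lambda_i^n$ is a finite sum of $n$-th powers of roots of unity, hence a periodic function of $n$, so $(L(f^n))$ is periodic, hence bounded.

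The only real content is $(1)\Rightarrow(3)$, which I would prove in two steps. Step one: boundedness of $(\tr(A^n))$ forces $|\lambda_i|\le1$ for every eigenvalue $\lambda_i$. For this, note that the generating function $\sum_{n\ge1}\tr(A^n)z^n=\sum_i\frac{\lambda_i z}{1-\lambda_i z}$ has bounded coefficients, hence defines a function holomorphic on the open unit disc, while each nonzero $\lambda_i$ contributes a genuine (non-cancelling) simple pole of the right-hand side at $z=\lambda_i^{-1}$; therefore $|\lambda_i^{-1}|\ge1$. Step two: by the Galois-invariance of the spectrum noted above, every conjugate of $\lambda_i$ is again an eigenvalue and so also has modulus $\le1$; Kronecker's theorem (a nonzero algebraic integer all of whose conjugates lie in the closed unit disc is a root of unity), together with $\det A=\pm1\neq0$, then shows that each $\lambda_i$ is a root of unity, i.e.\ $H_1(f)$ is quasi-unipotent. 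The main obstacle is precisely this last step -- the appeal to Kronecker's theorem -- whereas everything else is elementary manipulation of Lefschetz numbers, a divisor estimate, and the integrality of Dold coefficients. (Alternatively, the whole chain of equivalences is an instance of the classical description of bounded integral sequences satisfying the Dold congruences, cf.\ \cite{Bab-Bog} and \cite{Jez-Mar}.)
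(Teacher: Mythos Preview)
Your proof is correct and complete. The paper itself does not prove this lemma: it is stated as a well-known fact, marked with a \qed, and referenced to \cite[Fact 2.5]{GMMM} (and implicitly to the classical characterization in \cite{Bab-Bog}, \cite[Thm.~3.1.46]{Jez-Mar}, which you also cite in your closing parenthetical). So there is no ``paper's own proof'' to compare against; you have supplied what the paper omits.

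Your argument follows exactly the standard route one finds in those references: the reduction to boundedness of $(\tr A^n)$ via the trivial $H_0,H_2$ contributions, the divisor-count estimate $|a_n(f)|\le C\,d(n)/n$ combined with integrality for $(1)\Rightarrow(2)$, the finite periodic expansion for $(2)\Rightarrow(1)$, and---for the only substantive implication $(1)\Rightarrow(3)$---the rational generating function $\sum_n \tr(A^n)z^n$ to force $|\lambda_i|\le 1$, followed by Kronecker's theorem using that $A\in GL(d,\bz)$. One small remark: your claim that each $\lambda_i^{-1}$ is a ``genuine (non-cancelling) simple pole'' is correct because after grouping equal eigenvalues the residue at $z=\lambda^{-1}$ is $-m_\lambda/\lambda\neq 0$ (with $m_\lambda$ the algebraic multiplicity), and distinct eigenvalues give poles at distinct points; it would do no harm to say this explicitly.
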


From Lemma \ref{fact:bounded_finite_unipotent} it follows that for a homeomorphism $f\colon S_\g \to S_\g$ the set $\cAP(f)$ is finite, equivalently $\mathcal{D}(f)$ has only finitely many nonzero terms, if and only if the spectrum $\sigma(A)$ of the symplectic $2\g \times 2\g$ matrix $A=H_1(f)$ consists only of roots of unity. Furthermore,  we will show that such the spectrum $\sigma(A)$ 
determines uniquely  Dold coefficients $\mathcal{D}(f) = (a_k(f))$.

First of all, we will combine several facts to show that the only obstacle to realize a multiset of primitive roots of unity as the spectrum of some quasi-unipotent map is just parity of the numbers of $1$s, roots of degree $1$, and $-1$s, roots of degree $2$.
Recall that a complex number $\xi_k$ that is a root of unity of degree $k$ is called \emph{primitive} if it is not a root of unity of a smaller degree, i.e. $\xi_k^k=1$, but $\xi_k^d \neq 1$ for every $1 \leq d < k$. Any integer polynomial with $\xi_k$ as a root is divisible by its minimal polynomial called \emph{$k$th cyclotomic polynomial}, denoted by $\varphi_k$. In particular, all primitive roots of unity of degree $k$ are all roots of $\varphi_k$. Their number, equal to the degree of $\varphi_k$, is described by the Euler totient function  $\varphi(n)= \#\{k\leq n \colon\, {\rm LCM}(k,n)=1\}$. Since the spectrum of a quasi-unipotent matrix consists of all roots of its characteristic polynomial, if it contains a primitive root of degree $k$, it includes all $\varphi(k)$ primitive roots of unity of degree $k$.

By $\Mod (S_\g)$ we denote the mapping class group of $S_\g$, i.e. the group of isotopy classes of homeomorphisms of $S_\g$. We have the action of $\Mod (S_\g)$ on $H_1(S_\g, \mathbb Z)$ which induces a homomorphism $\Psi \colon \Mod (S_\g) \to  Sp(2\g,\bz)$. 


 \begin{teo}\label{roots}
   Let $R$ be a multiset consisting of $r_k$ times all primitive roots of unity of degree~$k$, where $\sum_k r_k \varphi(k)=2\g$. Then there exists a quasi-unipotent orientation-preserving homeomorphism $f \colon S_\g \to S_\g$ such that the spectrum of $H_1(f)$ is equal to $R$ if and only if $r_1$ and $r_2$ are even.
      \end{teo}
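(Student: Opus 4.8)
The plan is to prove the two implications separately, reducing the substantive (``if'') direction to a matrix-realization problem; I begin with \emph{necessity}. Suppose such an $f$ exists and set $A = H_1(f) \in Sp(2\g,\bz)$; its multiset of eigenvalues, counted with multiplicity, is $R$. As $A$ is symplectic we have $\det A = 1$, so the product of all elements of $R$ equals $1$. For each $k \ge 3$ the product of the $\varphi(k)$ primitive $k$-th roots of unity is $(-1)^{\varphi(k)}\varphi_k(0) = 1$, since $\varphi(k)$ is even and $\varphi_k(0)=1$; together with the contribution $1^{r_1}$ of the eigenvalue $1$ and $(-1)^{r_2}$ of the eigenvalue $-1$ this gives $1 = \det A = (-1)^{r_2}$, hence $r_2$ is even. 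Since $2\g = r_1 + r_2 + \sum_{k\ge 3} r_k\varphi(k)$ and each $\varphi(k)$ with $k\ge 3$ is even, reducing modulo $2$ forces $r_1$ to be even as well.

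For \emph{sufficiency}, assume $r_1$ and $r_2$ are even. By the surjectivity of $\Psi\colon \Mod(S_\g) \to Sp(2\g,\bz)$ recalled above --- every integer symplectic matrix is the action on $H_1$ of some orientation-preserving homeomorphism of $S_\g$ --- it suffices to exhibit a matrix $A \in Sp(2\g,\bz)$ with spectrum $R$. The corresponding $f$ is then automatically quasi-unipotent in the sense of Definition~\ref{quasi}: for an orientation-preserving homeomorphism of a closed orientable surface one has $H_0(f) = H_2(f) = \mathrm{id}$, and the only remaining homology is $H_1(f) = A$, all of whose eigenvalues are roots of unity (compare also Lemma~\ref{fact:bounded_finite_unipotent}). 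I would construct $A$ blockwise: $r_1/2$ diagonal blocks equal to $I_2$ (contributing the factor $(x-1)^{r_1}$ to the characteristic polynomial), $r_2/2$ diagonal blocks equal to $-I_2$ (contributing $(x+1)^{r_2}$), and, for each $k\ge 3$ with $r_k>0$, exactly $r_k$ diagonal blocks equal to a fixed matrix $M_k \in Sp(\varphi(k),\bz)$ with characteristic polynomial $\varphi_k$. The block-diagonal matrix thus obtained is symplectic --- with respect to the direct sum of the standard forms, which is equivalent over $\bz$ to the standard symplectic form --- it has size $\sum_k r_k\varphi(k) = 2\g$, and its characteristic polynomial is $\prod_k \varphi_k^{r_k}$, whose multiset of roots is precisely $R$.

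The single nontrivial ingredient is the existence of the blocks $M_k$ for $k\ge 3$: the construction of an \emph{integer} (not merely rational) symplectic matrix realizing a prescribed monic palindromic integer polynomial as its characteristic polynomial, applied here to the cyclotomic polynomials $\varphi_k$, which are monic, integral, palindromic, and of even degree $\varphi(k)$ when $k\ge 3$. This is the realization statement I would establish through the construction of Appendix~1, adapting the classical passage from a reciprocal polynomial written in the form $x^{m}\Psi(x+1/x)$ to a symplectic matrix built out of the companion matrix of $\Psi$; I expect checking that this matrix has integer entries and the prescribed characteristic polynomial to be the most delicate point. The blocks for $k=1,2$, the assembly of the block matrix, the passage from $A$ to a homeomorphism, and the verification of quasi-unipotence are then routine given the facts already quoted.
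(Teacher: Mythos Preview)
Your argument is correct and structurally the same as the paper's: reduce via Burkhardt's surjectivity $\Psi\colon \Mod(S_\g)\to Sp(2\g,\bz)$ to a matrix-realization problem, then realize the prescribed polynomial $\prod_k\varphi_k^{r_k}$ as the characteristic polynomial of an integer symplectic matrix. Two points of comparison are worth noting. First, for necessity the paper invokes the standard fact that the characteristic polynomial of any $A\in Sp(2\g,\bz)$ is palindromic and observes that $\varphi_1,\varphi_2$ are the only non-palindromic cyclotomic factors; your $\det A=1$ plus dimension-parity argument is an equally short alternative and has the slight advantage of not needing the palindromicity lemma. Second, for sufficiency the paper quotes Yang's theorem in one stroke (every monic palindromic integer polynomial of even degree is the characteristic polynomial of some integer symplectic matrix) and the Appendix unpacks this block-by-block for each $\varphi_k$, $k\ge 3$, exactly as you propose with the $I_2$, $-I_2$, $M_k$ decomposition; however, the construction actually carried out there is not the $x^m\Psi(x+1/x)$--companion-matrix trick you anticipate. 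The naive matrix $\bigl(\begin{smallmatrix}0 & -I\\ I & C_\Psi\end{smallmatrix}\bigr)$ has the right characteristic polynomial but fails to be symplectic unless $C_\Psi$ is symmetric, so the delicate point is symplecticity rather than integrality. Yang's construction, reproduced in the Appendix, instead works inside the cyclotomic field $\bq(\xi_k)$: one uses the trace-dual basis to the power basis, rescaled by $\xi_k^{1-\g}\varphi_k'(\xi_k)$, to produce a skew form $M$, conjugates $M$ to $\Omega$ over $\bz$, and then the multiplication-by-$\xi_k$ matrix in the new basis is the desired $M_k\in Sp(\varphi(k),\bz)$.
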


      \begin{proof}
          The proof is a consequence of a few facts. The first of them is  the classical theorem of  Burkhardt (1896), cf. \cite[Proposition 7.3]{Farb-Marg}.
          It states that the homomorphism  $\Psi \colon \Mod (S_\g) \to  Sp(2\g,\bz)$ is surjective. Hence it provides the geometrical realization of every symplectic matrix $A \in Sp(2\g,\bz)$ as $A= H_1(f)$ 
          by a preserving orientation homeomorphism $f\colon S_\g \to S_\g$. 

          The next one  is a result shown by  Q. Yang, (\cite[Theorem 1]{Yang2}). It states, for a polynomial  $\omega(x) \in \mathbb{Z}[x]$ of degree $2\g$, that there exists a symplectic matrix $A \in {\rm Sp}(2g,\mathbb{Z})$ whose characteristic polynomial $\chi_A(x) = \omega(x)$ if and only if $\omega$ is monic and palindromic. (In  the Appendix 1 we recapitulate the proof of this theorem adapting it to the studied problem and providing an example).
          Therefore $R$ is the spectrum of $H_1(f)$ for some $f$ if and only if there exists a monic palindromic polynomial with $R$ as the multiset of its roots. It remains to observe that all cyclotomic polynomials are palindromic except the first two: $\varphi_1(x) = x-1$ and $\varphi_2(x)=x+1$, and that an even number of $\varphi_1$ and $\varphi_2$ in the product of cyclotomic polynomials is necessary and sufficient condition for it to be palindromic.
          \end{proof}


 \subsection{Ramanujan's sum}
	
	Let
	\begin{equation}\label{ck}
	c_k(n) = \sum_{\substack{1 \leq d \leq k \\ \gcd(d,k)=1}} e^{2\pi i dn/k}
	\end{equation}
	be the sum of $n$-powers of all primitive roots of unity of degree $k$, called the \emph{Ramanujan's sum}. 
   By definition,  $\reg_k(n)$ can be written as
	$$
	\reg_k(n) = \sum_{d=1}^k e^{2\pi i dn/k} = \sum_{d | k} c_d(n).
	$$
    Thus applying the M{\"o}bius inversion formula we get
	$$
	c_k(n) = \sum_{d|k} \mu\left(\frac{k}{d}\right)\reg_d(n)  = \sum_{d | \gcd(k,n) } \mu\left(\frac{k}{d}\right)d.
	$$



	\subsection{ M{\"o}bius inversion formula for posets}
	
	In this short section we  introduce the generalization of the M{\"o}bius function and inversion formula for arbitrary posets (cf. \cite{Stanley}).
	
	Suppose $(\P,\leq)$ is a locally finite partially ordered set, that is each closed interval
	$$
	[a,b] = \{ c \colon a \leq c \leq b\}
	$$
	is finite, and so also open and half-open bounded intervals are finite. The M{\"o}bius function on $(\P,
	\leq)$ is $\mu \colon \P \times \P \to \ZZ$ defined inductively as follows:
	$$
	\mu(a,b) = \begin{cases}
		\ \ \ \ \ \ 1 \text{ \ \ \ \ \ \ \ \ \ \ \ \ \  if $a = b$,}\\
		- \sum\limits_{c\, \in\, [a,b)} \mu(a,c)  \text{\, \  if $a < b$,} \\
		\ \ \ \ \ \ 0 \text{ \ \ \ \ \ \ \ \ \ \ \ \ \  if $a \not\leq b$.}
	\end{cases}
	$$
	Hence there is the relation $\sum\limits_{c\, \in\, [a,b]} \mu(a,c) = 0$. The classical M{\"o}bius function corresponds to $\mu(a,b) = \mu\left(\frac{b}{a}\right)$ defined on the poset $(\NN,|)$, using the relation of divisibility.
	
	For $(\P,\leq)$ we define the \emph{incidence algebra} of functions $f,g\colon \P \times \P \to \ZZ$ such that $f(a,b) = 0$ if $a \not\leq b$ (i.e. $[a,b] = \varnothing$) with addition and scalar multiplication pointwise and convolution $*$ defined by
	$$
	(f*g)(a,b) = \sum_{c \in [a,b]} f(a,c)g(c,b).
	$$
	One can show that the  M{\"o}bius function on $(\P,\leq)$ has an inverse with respect to the convolution in the incidence algebra given by the
	\emph{zeta function} $\zeta(a,b) = 1$ for $a \leq b$ and $0$ otherwise. Therefore the M{\"o}bius inversion formula also holds for the incidence algebra:
	$$
	g(a,b) = (f*\mu)(a,b) =  \sum_{c \in [a,b]} f(a,c)\mu(c,b)
    $$
    is equivalent to
    $$
    f(a,b) = (g* \zeta)(a,b) = \sum_{c \in [a,b]} g(a,c).
	$$

	\subsection{Correspondence between multisets of roots of unity and Dold coefficients}
	
	Suppose we have a quasi-unipotent orientation-preserving homeomorphism $f$ of an orientable surface with the spectrum of $H_1(f)$  described by the sequence $(r_k)$, i.e. $H_1(f)$  has $r_k$ times all primitive roots of unity of degree $k$ as eigenvalues. Obviously, only finitely many $r_k$ are nonzero. Alternatively we use the description of the spectrum by a multiset $S = \{n,n,\ldots,n \ \text{ ($r_n$ times)} \colon r_n \neq 0\}$. For example, the sequence $(r_k) = (2,0,1,1,0,\ldots)$ and multiset $S = \{1,1,3,4\}$ describe the same spectrum consisting of all primitive roots of unity of degree $1$ (twice), $3$ and $4$.

	 Since
	$$
	\sum_{k} r_k c_k(n) = \sum_k r_k \sum_{d|k} \mu\left(\frac{k}{d}\right)\reg_d(n) =  \sum_{k,d}  \mu(d,k) r_k \reg_d(n),
	$$
	the Lefschetz numbers of iterations of $f$ are given by
	$$
	L(f^n) = 2 - \sum_{k} r_k c_k(n) = \sum_{k,d}   \mu(d,k) m_k \reg_d(n),
	$$
	where $m_k = -r_k$ for $k\geq 2$ and $m_1 = 2 - r_1$.
	
	By the M{\"o}bius inversion formula the sequence $a_n(f)$ is uniquely determined, so from the equality
	$$
	\sum_{d|n} a_d(f)  d  = L(f^n) = \sum_{d|n} \left( \sum_k  \mu(d,k) m_k\right) d
	$$
	we get
    $$
	a_n(f) = \sum_k \mu(n,k) m_k = \sum_{k=1}^{K} \mu(n,k) m_k,
    $$
	where $K = \max\{ k \colon r_k \neq 0\}$. In particular, $a_n(f)=0$ for $n > K$.
	
	Consider the poset $(\NN, \preceq )$ defined by the relation $a  \preceq  b$ if and only if $b | a$. The corresponding  M{\"o}bius function $\widetilde{\mu}$ is given by $\widetilde{\mu}(a,b) := \mu(b,a) = \mu(\frac{a}{b})$. 	We get
	\begin{equation}\label{formula:a_nf-->m_k}
		a_n(f) = \sum_{k \colon n| k} \mu(n,k)m_k = \sum_{ K!\, \preceq \,k\, \preceq\, n} \widetilde{\mu}(k,n) m_k,		
	\end{equation}
	where we consider these sequences as $a_n(f) = a_f(K!,n)$ and $m_k = m(K!,k)$, where the functions $a_f$ and $m$ are set to be $0$ for other arguments. By the M{\"o}bius inversion formula
	\begin{equation}\label{formula:m_k--->a_nf}
		m_k = \sum_{K! \, \preceq \,n\, \preceq\, k} a_n(f) = \sum_{n \colon k |n } a_n(f) = \sum_n a_{kn}(f).
	\end{equation}


\begin{defi}
	A sequence $(a_n)$ with finitely many nonzero elements is called \emph{realizable} in~${\rm Homeo}^+(S_\g)$ if it is equal to $(a_n(f))$ for an orientation-preserving homeomorphism $f\colon S_\g \to S_\g$.
\end{defi}

	\begin{theorem}\label{theorem:correspondence_Dold_coeff_and_roots_of_unity_and_genus}
		The formulas (\ref{formula:a_nf-->m_k}) and (\ref{formula:m_k--->a_nf}) give a 1-1 correspondence between all sequences $(a_n)$ with finitely many nonzero terms and all sequences $(r_k)$ with finitely many nonzero terms. For a sequence $(a_n)$ the corresponding sequence $(r_k)$ is given by
		\begin{equation}\label{r_k_to_a_k}
		r_1 = 2 - \sum_n a_n, \ \ r_2 = -\sum_n {a_{2n}}, \ \ldots,\ r_k = - \sum_n a_{kn}, \ \ldots
	\end{equation}
		The sequence $(a_n)$ is realizable if and only if the corresponding sequence $(r_k)$ is non-negative and $r_1$ and $r_2$ are even. If it is the case, $(a_n)$ is realizable by a quasi-unipotent orientation-preserving homeomorphism $f\colon S_\g \to S_\g$ on a closed orientable surface of genus $g$ satisfying
		\begin{equation}\label{genus-ak}
		2\g = \sum_k r_k \varphi(k) = 2 - \sum_{n} na_n(f).
		\end{equation}
		Moreover, the spectrum of $H_1(f)$  and genus $\g$ are uniquely determined by $(a_n)$.
	\end{theorem}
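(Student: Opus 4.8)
The statement packages together the M{\"o}bius-inversion bookkeeping of the previous subsection, Theorem~\ref{roots}, and the elementary totient identity $\sum_{d\mid N}\varphi(d)=N$, so the plan is to assemble it in a few steps. For the $1$--$1$ correspondence I would read the formulas (\ref{formula:a_nf-->m_k}) and (\ref{formula:m_k--->a_nf}) in their plain form, $a_n=\sum_{k\,:\,n\mid k}\mu(n,k)\,m_k$ and $m_k=\sum_{j\geq1}a_{kj}$, and verify that they are mutually inverse maps between finitely supported sequences: if $(m_k)$ is supported in $\{1,\dots,K\}$, each $a_n$ is a finite sum with $a_n=0$ for $n>K$, and symmetrically $(a_n)$ finitely supported makes each $m_k$ a finite sum vanishing for large $k$. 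That the two composites are the identity is, once both sequences are restricted to the finite lattice of divisors of $K!$ ordered by $\preceq$ (which contains every index that contributes), exactly the poset M{\"o}bius inversion formula recalled above (cf.\ \cite{Stanley}); one also checks that the outcome does not depend on the auxiliary $K$. Composing with the affine bijection $m_1=2-r_1$, $m_k=-r_k$ $(k\geq2)$ and substituting $m_k=\sum_j a_{kj}$ then yields the claimed bijection $(a_n)\leftrightarrow(r_k)$ together with the explicit formula (\ref{r_k_to_a_k}).

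For the realizability criterion I would argue both implications through Theorem~\ref{roots}. If the sequence $(r_k)$ associated to $(a_n)$ is non-negative with $r_1,r_2$ even, then $r_1+r_2$ is even and $\varphi(k)$ is even for every $k\geq3$, so $2\g:=\sum_k r_k\varphi(k)$ is even; taking $R$ to be the multiset containing each primitive $k$-th root of unity with multiplicity $r_k$, Theorem~\ref{roots} provides a quasi-unipotent $f\in{\rm Homeo}^+(S_\g)$ with $\sigma(H_1(f))=R$, and the computation preceding the theorem identifies $(a_n(f))$ with the sequence assigned to $(r_k)$ by the bijection, i.e.\ $(a_n(f))=(a_n)$. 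Conversely, if $(a_n)$ is realized by some $f\colon S_{\g'}\to S_{\g'}$, then $\cAP(f)$ is finite, so $H_1(f)$ is quasi-unipotent by Lemma~\ref{fact:bounded_finite_unipotent}; its characteristic polynomial is then a product of cyclotomic polynomials, so its spectrum is a multiset of roots of unity with well-defined non-negative multiplicities, which the same computation forces to equal $(r_k)$. Since this spectrum is actually realized, Theorem~\ref{roots} also forces $r_1$ and $r_2$ to be even, which completes the equivalence.

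For the genus formula I would compute directly: starting from $2\g=\sum_k r_k\varphi(k)$ and using $r_1=2-m_1$, $r_k=-m_k$, $\varphi(1)=1$, $m_k=\sum_j a_{kj}$, and $\sum_{k\mid N}\varphi(k)=N$,
\begin{align*}
2\g&=\sum_k r_k\varphi(k)=2-\sum_k m_k\varphi(k)=2-\sum_k\varphi(k)\sum_{j\geq1}a_{kj}\\
&=2-\sum_{N\geq1}a_N\sum_{k\mid N}\varphi(k)=2-\sum_n na_n(f),
\end{align*}
which is (\ref{genus-ak}). Since $(r_k)$ --- hence the multiset $R=\sigma(H_1(f))$ --- and $\g=\bigl(2-\sum_n na_n\bigr)/2$ depend only on $(a_n)$, the spectrum of $H_1(f)$ and the genus are uniquely determined; in particular a realizable $(a_n)$ can be realized only in ${\rm Homeo}^+(S_\g)$ for this single value of $\g$.

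I expect the only genuinely delicate point to be the first step: making precise that the poset M{\"o}bius correspondence, which above was set up on the finite divisor lattice of $K!$, genuinely defines a bijection between \emph{all} finitely supported sequences on the two sides, independently of the choice of $K$. Everything after that is formal, given Theorem~\ref{roots} and the totient identity.
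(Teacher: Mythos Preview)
Your proposal is correct and follows essentially the same route as the paper: the 1--1 correspondence is the poset M{\"o}bius inversion already set up before the theorem, the realizability criterion is reduced to Theorem~\ref{roots}, and the genus identity is the same totient computation $2\g=\sum_k r_k\varphi(k)=2-\sum_n na_n$. You are in fact more explicit than the paper on two points (the converse implication via Lemma~\ref{fact:bounded_finite_unipotent}, and the independence of the bijection from the auxiliary bound $K$), which the paper leaves implicit.
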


	\begin{proof}
		By the description of symplectic matrices on $\ZZ^{2\g}$, a set consisting of only roots of unity can be the spectrum of a symplectic matrix if and only if it is described by $(r_k)$ satisfying the above conditions (Theorem \ref{roots}). 
  Thus the realization of such $(r_k)$ and $(a_n)$ is given by a construction of a homeomorphism on $S_\g$ inducing a linear map on $H_1(S_\g)$ with such a matrix in the standard basis  (see Appendix \ref{AppendixYang}).
The first equality for $2\g$ is just the size of the matrix computed from its spectrum. Moreover, by (\ref{formula:m_k--->a_nf}) 
we get:
		$$
			2g = \sum_k r_k \varphi(k) = 2 - \sum_{k,n} a_{kn}\varphi(k) = 2 - \sum_n \sum_{d | n} a_n \varphi(d) = 2- \sum_{n} a_n\sum_{d|n} \varphi(d)  = 2- \sum_{n} na_n.
		$$
	\end{proof}

\begin{rema}
    Note that using (\ref{formula:a_nf-->m_k}) the formulas for $(a_n)$ in terms of $(r_k)$ are the following:
   \begin{equation}\label{a_k_to_r_k}
   a_1(f) = 2 - \sum_k \mu(k) r_k, \ \ a_n(f) = - \sum_{k\colon n|k} \mu\left(\frac{k}{n}\right)r_k \ \ \text{ for $n \geq 2$}.
	\end{equation}
\end{rema}

\begin{exem}
	A sequence $(a_1,0,\ldots)$ is realizable if and only if $2 \geq a_1$ and $a_1$ is even.
\end{exem}

\begin{exem}
	A sequence $(a_1,a_2,0,\ldots)$ corresponds to $r_1 = 2 - a_1 - a_2$ and $r_2 = -a_2$, $r_n =0$ for $n\geq 3$. Therefore it is realizable if and only if $a_2 \leq 0$ is even and $a_1 \leq  2 -a_2$ is also even. Thus
	$$
	\begin{bmatrix}
		a_1\cr
		a_2
	\end{bmatrix}
=
	\begin{bmatrix}
		2\cr
		0
	\end{bmatrix} +
	x \begin{bmatrix}
		1\cr
		-1
	\end{bmatrix} +
	y \begin{bmatrix}
		-1\cr
		0
	\end{bmatrix},
	$$
	for $x,y \in 2\ZZ_{\geq 0}$.
\end{exem}

\begin{exem}
	Consider a sequence $(a_1,a_2,a_3,0,\ldots)$. The correspondence gives
	$$
	r_1 = 2- a_1 - a_2 - a_3, \ \ r_2 = -a_2 \ \ \text{ and } \ \ r_3 = -a_3.
	$$
	Therefore realizable sequences can be described as follows:
	$$
	\begin{bmatrix}
		a_1\cr
		a_2\cr
		a_3
	\end{bmatrix}
=
	\begin{bmatrix}
		2\cr
		0\cr
		0
	\end{bmatrix} +
	x \begin{bmatrix}
		1\cr
		0 \cr
		-1
	\end{bmatrix} +
	y \begin{bmatrix}
		1 \cr
		-1\cr
		0
	\end{bmatrix} +
	z \begin{bmatrix}
		-1 \cr
		0\cr
		0
	\end{bmatrix},
	$$
	for $x, z \in \ZZ_{\geq 0}$, $y\in 2\ZZ_{\geq 0}$ and $x \equiv z  \mod 2$.
\end{exem}

Note that for $(r_1,\ldots,r_K,0,\ldots)$, $a_K = - r_K$ if $K\geq 2$, so a nonzero $a_K$ in this case have to be negative for $(a_n)$ to be realizable.

\begin{corollary}\label{corollary:necessary_conditions_for_realizing_a_n}
	If a sequence $(a_n)$  with finitely many nonzero terms is non-positive and $\sum_n a_n$ and $\sum_{n} a_{2n}$ are even, then it is realizable. \qed
\end{corollary}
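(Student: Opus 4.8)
The plan is to deduce the corollary directly from Theorem~\ref{theorem:correspondence_Dold_coeff_and_roots_of_unity_and_genus}. That theorem already establishes the equivalence: a sequence $(a_n)$ with finitely many nonzero terms is realizable in ${\rm Homeo}^+(S_\g)$ if and only if the associated sequence $(r_k)$, given by $r_1 = 2 - \sum_n a_n$ and $r_k = -\sum_n a_{kn}$ for $k\geq 2$ as in (\ref{r_k_to_a_k}), is non-negative and has $r_1$ and $r_2$ even. So the entire task reduces to verifying these three conditions under the hypotheses that $(a_n)$ is non-positive (and finitely supported) and that $\sum_n a_n$ and $\sum_n a_{2n}$ are even.

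First I would check non-negativity of $(r_k)$. For $k\geq 2$, the formula $r_k = -\sum_n a_{kn}$ is a finite sum of non-positive terms negated, hence $r_k \geq 0$. For $k=1$, since $\sum_n a_n \leq 0$ we get $r_1 = 2 - \sum_n a_n \geq 2 > 0$. Thus $(r_k)$ is non-negative (in fact with $r_1$ strictly positive). Next, the parity conditions: $r_1 = 2 - \sum_n a_n$ is even because $2$ is even and $\sum_n a_n$ is even by assumption, and $r_2 = -\sum_n a_{2n}$ is even because $\sum_n a_{2n}$ is even by assumption. Both requirements of the criterion are met, so Theorem~\ref{theorem:correspondence_Dold_coeff_and_roots_of_unity_and_genus} yields that $(a_n)$ is realizable — indeed by a quasi-unipotent orientation-preserving homeomorphism of $S_\g$ with $2\g = 2 - \sum_n n a_n$ as in (\ref{genus-ak}).

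There is no genuine obstacle here; the statement is a bookkeeping consequence of the equivalence established in the previous theorem. The only point deserving a moment's care is the sign analysis: one must use $\sum_n a_n \leq 0$ (not merely finiteness of the support) to conclude $r_1 \geq 2$, and it is precisely the non-positivity of \emph{all} the $a_n$ — rather than just the two parity hypotheses — that simultaneously forces every $r_k$ with $k\geq 2$ to be non-negative. Beyond that, the proof is immediate.
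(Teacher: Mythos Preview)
Your argument is correct and is exactly the intended one: the paper states the corollary with a \qed immediately after it, i.e.\ as an immediate consequence of Theorem~\ref{theorem:correspondence_Dold_coeff_and_roots_of_unity_and_genus} via precisely the sign and parity check you wrote out. There is nothing to add.
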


On the other hand,  realizable sequences can have some negative entries, for example
the sequence $(a_n) = (1,2,1,-1,0,-1,0,\ldots)$ corresponds to $(r_k) = (0,0,0,1,0,1,0,\ldots)$ ($S = \{4,6\}$).

\begin{rema}
	For a finite poset $(\P, \leq)$ with enumerated elements $p_1,\ldots,p_K$ in the way compatible with the order, i.e. $p_i \leq p_j$ implies $i\leq j$, one can consider each function $f$ in the incidence algebra of $(\P, \leq)$ as an upper triangular matrix $A_f$ defined by $A_f = [f(p_i,p_j)]_{i,j}$. Then the incidence algebra is isomorphic to the subalgebra of all upper triangular matrices of size $K \times K$ with matrix addition and product and scalar multiplication.
	
	Under this isomorphism for $(\{1,\ldots,K\},|) \subset (\NN,|)$, the matrix $A_\mu$ corresponding to the M{\"o}bius function has an inverse which is given by $A_\zeta = [ \zeta(i,j) ]_{ij}$ consisting of only zeros and ones with a nice pattern: $(i,j)$-entry is $1$ if and only if $i \,|\, j$.
	
	If we write the nonzero parts of sequences $(a_n)$ and $(m_k)$ as the column vectors
	$$
	a = [a_1,\ldots,a_{K}]^T,\ \
	m = [m_1,\ldots,m_K]^T,
	$$
	then
	$$
	a = A_\mu m \ \ \text{ and } \ \ m = A_\zeta a.
	$$
	
	The relation $a = A_\mu m$ for odd indices, obtained in a direct way, was used in  PhD thesis of A.~Myszkowski (\cite[Theorem~3.30]{Mysz}) for the realization, by some linear map on $H_1(S_\g)$, of minimal set of Lefschetz periods and the subsequence of $(a_n)$ of odd indices.
\end{rema}


\section{Realization of a given set of numbers as algebraic periods and the problem of minimal genus}\label{genus-minimal}

\subsection{Algebraic periods}\label{genus-minimal1}

$\newline$

Although Dold coefficients determine uniquely the genus of the surface on which they are realized, the same fact does not hold for algebraic periods. In this section we deal with the problem of finding minimal genus  of a realization of a given set of natural numbers $\cA$  as algebraic periods of some orientation-preserving homeomorphism.

First, we present a simple procedure that allows one to find relatively small genus $\g$ in every situation and it is given as an explicit formula (Proposition \ref{prop:realizable_genus_for_AP} below). Next, we describe the problem of finding minimal genus  in the language of integer linear programming (Section~\ref{linear}).

\begin{prop}\label{prop:realizable_genus_for_AP}
	Let $\cA\subset \NN$ be a finite set and take $a_n = -1$ for $n \in \cA$ and $0$ otherwise. We change $(a_n)$ to guarantee that it is realizable.
	Define indices $n_0$ and $n_1$ as follows:
	\begin{enumerate}[1)]
		\item if $\sum_n a_n$ and $\sum_{n} a_{2n}$ are even, then $n_0 := 0 =: n_1$.
		\item if $\sum_n a_n$ and $\sum_{n} a_{2n}$ are odd, then let $n_0$ be the smallest even index in $\cA$. Take $n_1 := 0$.
		\item if $\sum_n a_n$ is odd and $\sum_{n} a_{2n}$ is even, then let $n_1$ be the smallest odd index in $\cA$. Take $n_0 := 0$.
		\item if $\sum_n a_n$ is even and $\sum_{n} a_{2n}$ is odd, then let $n_0$ be the smallest even index in $\cA$, and let $n_1$ be the smallest odd index in $\cA$.
	\end{enumerate}
	Now, change $a_{n_0} := -2$ if $n_0 \neq 0$ and $a_{n_1} := -2$ if $n_1 \neq 0$ which geometrically is an equivalent of adding all roots of unity of degree $n_0$ or $n_1$ to the spectrum of the realization and increases the genus by $n_0$ or $n_1$ (or both). Then
by Theorem \ref{theorem:correspondence_Dold_coeff_and_roots_of_unity_and_genus} the sequence
 $(a_n)$ is realizable by an orientation-preserving homeomorphism $f \colon S_\g \to S_\g$ such that
	\begin{equation}\label{formula:realizable_genus}
	2\g = 2 + \sum_{n \in \cA} n + n_0 + n_1. 
	\end{equation}\qed
\end{prop}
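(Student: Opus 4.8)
The plan is to reduce everything to Corollary \ref{corollary:necessary_conditions_for_realizing_a_n} together with the genus formula (\ref{genus-ak}). The sequence $a_n=-1$ for $n\in\cA$ and $a_n=0$ otherwise is already non-positive with finitely many nonzero terms; for such a sequence the corresponding $(r_k)$ is automatically non-negative (indeed $r_1=2-\sum_n a_n\geq 2$ and, by (\ref{r_k_to_a_k}), $r_k=-\sum_n a_{kn}\geq 0$ for $k\geq 2$), so the only reason $(a_n)$ can fail to be realizable is that $\sum_n a_n$ or $\sum_n a_{2n}$ is odd. Hence I would first record how a single correction $a_m:=-2$ affects these two sums, then check the four cases, then read off the genus.

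For the bookkeeping: replacing $a_m=-1$ by $a_m=-2$ changes $\sum_n a_n$ by $-1$, hence flips its parity, and changes $\sum_n a_{2n}$ by $-1$ (flipping its parity) when $m$ is even, while leaving $\sum_n a_{2n}$ untouched when $m$ is odd. Before addressing the cases I would verify that the indices invoked in (2)--(4) actually exist: if $\sum_n a_{2n}$ is odd it is nonzero, so $\cA$ contains an even number and the smallest even index $n_0\in\cA$ is defined (cases (2),(4)); and if $\cA$ contained only even numbers then $\sum_n a_n=\sum_{m\in\cA}a_m=\sum_n a_{2n}$, so whenever the two sums have opposite parities $\cA$ must contain an odd number and the smallest odd index $n_1\in\cA$ is defined (cases (3),(4)). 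This existence check is the only genuinely delicate point; everything else is routine.

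Next I would run through the cases. In (1) nothing is changed and both sums are already even. In (2) the single correction at the even $n_0$ flips both parities, turning the two odd values into even ones. In (3) the correction at the odd $n_1$ flips the parity of $\sum_n a_n$ (now even) and does not affect $\sum_n a_{2n}$ (already even). In (4) the correction at the even $n_0$ flips both and the correction at the odd $n_1$ flips $\sum_n a_n$ once more, so $\sum_n a_n$ returns to even and $\sum_n a_{2n}$ becomes even. In all four cases the modified $(a_n)$ takes values in $\{0,-1,-2\}$, is supported exactly on $\cA$, and has $\sum_n a_n$ and $\sum_n a_{2n}$ both even; by Corollary \ref{corollary:necessary_conditions_for_realizing_a_n} it is realizable by a quasi-unipotent orientation-preserving homeomorphism $f\colon S_\g\to S_\g$, and since its support is $\cA$ we get $\cAP(f)=\cA$.

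Finally I would compute the genus. Before the corrections $\sum_n n a_n=-\sum_{n\in\cA}n$, and each correction at an index $n_i\neq 0$ lowers $a_{n_i}$ by $1$, subtracting $n_i$ from $\sum_n n a_n$; hence for the modified sequence $\sum_n n a_n=-\sum_{n\in\cA}n-n_0-n_1$, and (\ref{genus-ak}) gives $2\g=2-\sum_n n a_n=2+\sum_{n\in\cA}n+n_0+n_1$, which is (\ref{formula:realizable_genus}). For the geometric remark in the statement, I would note that by (\ref{r_k_to_a_k}) a correction at $n_0$ raises $r_k$ by $1$ for every divisor $k\mid n_0$, i.e. adjoins to the spectrum one copy of all the $n_0$-th roots of unity, which enlarges $2\g$ by $\sum_{k\mid n_0}\varphi(k)=n_0$.
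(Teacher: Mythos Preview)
Your argument is correct and follows the same route as the paper, which simply appeals to Theorem~\ref{theorem:correspondence_Dold_coeff_and_roots_of_unity_and_genus} and notes afterward that ``the above described indices $n_0$ and $n_1$ always exist by the parity of considered sums.'' You have spelled out precisely what the paper leaves implicit: the existence of $n_0$ and $n_1$ in cases (2)--(4), the parity bookkeeping for each case, and the computation of the genus from (\ref{genus-ak}); invoking Corollary~\ref{corollary:necessary_conditions_for_realizing_a_n} rather than Theorem~\ref{theorem:correspondence_Dold_coeff_and_roots_of_unity_and_genus} directly is an immaterial difference.
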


The above described indices $n_0$ and $n_1$ always exists by the parity of considered sums.

\begin{rema}
	The above construction does not provide the minimum genus in general. For example, for $\cA = \{1,2\}$ it gives the sequence $(a_n) = (-2,-2,0,\ldots)$ with $(r_k) = (6,2,0,\ldots)$, so $\g =4$. However, it can be realized on a torus, $\g=1$, for $(r_k) = (0,2,0,\ldots)$ and $(a_n) = (4,-2,0,\ldots)$.
\end{rema}

\begin{exem}
	For $\cA = \{15\}$ the above procedure leads to $a_{15} = -2$ and $a_n = 0$ for $n\geq 15$, since $n_1 = 15$ and $n_0 = 0$ (the case 3) of the above theorem). Then the corresponding spectrum is described by $S = \{1,1,1,1,3,3,5,5,15,15\}$, or equivalently $(r_k) = (4, 0, 2, 0, 2, 0, 0, 0, 0, 0, 0, 0, 0, 0, 2,0,\ldots)$, so $\g = 16$. In fact, one can check using computer program (see the dataset \cite{GMM}), that this is the minimum genus for $\cA = \{15\}$.
\end{exem}

\begin{rema}
	If one can focus on the minimal set of Lefschetz periods $\MPer_L(f) = \cAP(f) \cap (2\NN -1)$, then $\cA= \{15\}$ can be obtained for the surface of genus $14$ for $r_k = 2$ for $k=6,10,30$ and $r_k = 0$ otherwise (so $S = \{6,6,10,10,30,30\}$). Then $\cAP(f) = \{2,15,30\}$ for $a_2 = 2 = a_{15}$, $a_{30} = -2$ and $a_n =0$ otherwise. The genus $14$ is the smallest one for which $\cA = \{15\}$ can be realized as $\MPer_L(f)$ (see the dataset \cite{GMM}).
\end{rema}


\subsection{Determination of all possible Dold coefficients of quasi-unipotent orientation-preserving homeomorphisms of a surface $S_{\g}$ of a given genus $\g$}\label{list}

$\newline$

The formula

\begin{equation}
2\g = \sum_k r_k \varphi(k) = 2 - \sum_{n} na_n(f)
\end{equation}
of Theorem \ref{theorem:correspondence_Dold_coeff_and_roots_of_unity_and_genus} suggests a correspondence between $(a_n(f))$ and partitions $P=(p_1,\ldots,p_
\g)$ of $2\g = \sum_n p_nn$ given by $a_n(f) = -p_n$ for $n \neq 1$ and $p_1 = 2-a_1$. However, Dold coefficients $a_n(f)$ can be both positive or negative, which does not lead to a partition in general. On the other hand, the sequence $(r_k)$ is non-negative, but we need to deal with the Euler function~$\varphi$.

For each $n$ consider the set
$$
\Phi_n := \varphi^{-1}(n) = \{ k \ \colon \ \varphi(k) = n \}.
$$
Then every spectrum of $H_1(f)$ described by $(r_k)$ leads to a partition $P = (p_n)$ of $2g$ given by $p_n = \sum_{k \in \Phi_n} r_k$. Note that the set $\Phi_n$ has an upper bound and so it is computable (see H. Gupta \cite{Gupta}).

Below we provide the following short algorithm for determining all possible Dold coefficients of quasi-unipotent orientation-preserving homeomorphisms of a surface $S_\g$ of a given genus~$\g$:

\begin{algorithm}\label{algorithm}\,
\begin{itemize}
    \item[] INPUT: genus $g \in \NN$.
    \item[] OUTPUT: list of all possible Dold coeffcients of quasi-unipotent orientation-preserving homeomorphisms of $S_g$.
    \begin{itemize}
      \item[1.] For each partition $P=(p_n)$ of $2\g = \sum p_n n$ and each $n$ consider all partitions of $p_n = \sum_{k \in \Phi_n} r_k$ into $|\Phi_n|$ non-negative numbers.
     \item[2.] Compute the corresponding Dold coefficients $(a_k)$ from $(r_k)$ using (\ref{a_k_to_r_k}) and add them to the output.
    \end{itemize}
  \end{itemize}
\end{algorithm}

The correctness of the above algorithm follows directly from Theorem \ref{theorem:correspondence_Dold_coeff_and_roots_of_unity_and_genus}. 

 By means of a computer program, Algorithm \ref{algorithm} allows us to determine all possible Dold coefficients, the sets of algebraic periods and the sets of possible eigenvalues for a given surface with a small genus $\g$.

 We list below the data for a torus (Table \ref{table:Torus_algebraic_period_orient_preserving}) and genus $2$ surface (Table \ref{table:Surface_genus_2_algebraic_period_orient_preserving}).
 We also provide in the public repository of Gda\'nsk University of Technology "Bridge of  Knowledge" the complete lists for other orientable surfaces up to genus $\g=30$ \cite{GMM}.

{\renewcommand{\arraystretch}{1.3}%

\begin{table*}[h]
	\caption{A torus --- all possible spectra $S$, corresponding Dold coefficients $(a_n(f))$ and sets of algebraic periods and minimal sets of Lefschetz periods for a quasi-unipotent orientation-preserving homeomorphism of a torus.}\label{table:Torus_algebraic_period_orient_preserving}
	\centering
	\begin{tabular}{@{}cccc@{}}
		\hline
		$S$ & $(a_n(f))$ & $\cAP(f)$ & $\cAP_{odd}(f)$ \\
		\hline
		\{1, 1\}  & (0, 0) & $\varnothing$ & $\varnothing$ \\
		\{2, 2\}  & (0, 4, -2) & \{1, 2\} & \{1\} \\
		\{3\}  & (0, 3, 0, -1) & \{1, 3\} & \{1, 3\} \\
		\{4\}  & (0, 2, 1, 0, -1) & \{1, 2, 4\} & \{1\} \\
		\{6\}  & (0, 1, 1, 1, 0, 0, -1) & \{1, 2, 3, 6\} & \{1, 3\} \\
		\hline
	\end{tabular}
\end{table*}

\begin{table*}[h]
	\caption{$S_2$ --- all possible spectra $S$, corresponding Dold coefficients $(a_n(f))$ and sets of algebraic periods and minimal sets of Lefschetz periods for a quasi-unipotent orientation-preserving homeomorphism of $S_2$.}\label{table:Surface_genus_2_algebraic_period_orient_preserving}
	\centering
	\begin{tabular}{@{}cccc@{}}
		\hline
		$S$ & $(a_n(f))$ & $\cAP(f)$ & $\cAP_{odd}(f)$ \\
		\hline
		\{1, 1, 1, 1\}  & (-2)  & \{1\}  & \{1\} \\
		\{1, 1, 2, 2\}  & (2, -2)  & \{1, 2\}  & \{1\} \\
		\{1, 1, 3\}  & (1, 0, -1)  & \{1, 3\}  & \{1, 3\} \\
		\{1, 1, 4\}  & (0, 1, 0, -1)  & \{2, 4\}  & $\varnothing$ \\
		\{1, 1, 6\}  & (-1, 1, 1, 0, 0, -1)  & \{1, 2, 3, 6\}  & \{1, 3\} \\
		\{2, 2, 2, 2\}  & (6, -4)  & \{1, 2\}  & \{1\} \\
		\{2, 2, 3\}  & (5, -2, -1)  & \{1, 2, 3\}  & \{1, 3\} \\
		\{2, 2, 4\}  & (4, -1, 0, -1)  & \{1, 2, 4\}  & \{1\} \\
		\{2, 2, 6\}  & (3, -1, 1, 0, 0, -1)  & \{1, 2, 3, 6\}  & \{1, 3\} \\
		\{3, 3\}  & (4, 0, -2)  & \{1, 3\}  & \{1, 3\} \\
		\{3, 4\}  & (3, 1, -1, -1)  & \{1, 2, 3, 4\}  & \{1, 3\} \\
		\{3, 6\}  & (2, 1, 0, 0, 0, -1)  & \{1, 2, 6\}  & \{1\} \\
		\{4, 4\}  & (2, 2, 0, -2)  & \{1, 2, 4\}  & \{1\} \\
		\{4, 6\}  & (1, 2, 1, -1, 0, -1)  & \{1, 2, 3, 4, 6\}  & \{1, 3\} \\
		\{5\}  & (3, 0, 0, 0, -1)  & \{1, 5\}  & \{1, 5\} \\
		\{6, 6\}  & (0, 2, 2, 0, 0, -2)  & \{2, 3, 6\}  & \{3\} \\
		\{8\}  & (2, 0, 0, 1, 0, 0, 0, -1)  & \{1, 4, 8\}  & \{1\} \\
		\{10\}  & (1, 1, 0, 0, 1, 0, 0, 0, 0, -1)  & \{1, 2, 5, 10\}  & \{1, 5\} \\
		\{12\}  & (2, -1, 0, 1, 0, 1, 0, 0, 0, 0, 0, -1)  & \{1, 2, 4, 6, 12\}  & \{1\} \\
		\hline
	\end{tabular}
\end{table*}

\begin{table*}[h]
	\caption{The numbers $\#Sp$, $\#\cAP$ and $\#\cAP_{odd}$ of all possible spectra, sets of algebraic periods and minimal sets of Lefschetz periods, respectively, for a quasi-unipotent orientation-preserving homeomorphism of an orientable surface $S_\g$ of genus $\g$.}\label{table:algebraic_period_orient_preserving}
	\centering
	
	\begin{tabular}{cccc|cccc}
		\hline
		$\g$ & $\#Sp$ & $\#\cAP$ & $\#\cAP_{odd}$ & $\g$ & $\#Sp$  & $\#\cAP$ & $\#\cAP_{odd}$ \\ \hline
		1    & 5      & 5        & 3              & 11   & 39881   & 6629     & 161            \\
		2    & 19     & 15       & 5              & 12   & 76085   & 10939    & 218            \\
		3    & 59     & 40       & 9              & 13   & 141877  & 17469    & 283            \\
		4    & 165    & 93       & 15             & 14   & 259373  & 27672    & 360            \\
		5    & 419    & 192      & 21             & 15   & 465493  & 43011    & 477            \\
		6    & 1001   & 381      & 33             & 16   & 821813  & 66035    & 613            \\
		7    & 2257   & 719      & 49             & 17   & 1428725 & 100032   & 772            \\
		8    & 4877   & 1322     & 64             & 18   & 2449573 & 150154   & 982            \\
		9    & 10133  & 2317     & 88             & 19   & 4145249 & 222234   & 1224           \\
		10   & 20399  & 3977     & 125            & 20   & 6931259 & 326944   & 1522    \\
		\hline    
		  
	\end{tabular}

\end{table*}

 Let us remark, that these lists coincide in respect to odd algebraic periods (i.e. sets of minimal Lefschetz periods $\MPer_L(f)$) to their counterpart  given in  \cite{LlSi1} for small $\g$, as in \cite{GLM} and the database described therein.
 However, we provide also data for even values. In Table \ref{table:algebraic_period_orient_preserving} we may observe that the number and growth rate of admissible values for both odd and even data is much greater than for only odd ones.

   On the other hand, the knowledge of all (odd and even data) allows us to draw much more information about the map behaviour of homeomorphism on a given surface $S_{\g}$ and the structure of their periodic points (see the application to periodic point theory described by Remark \ref{2} below).

\begin{rema}\label{2}
J. Nielsen in \cite{Niel1} proved that for an  orientation-preserving self-homeomorphism $f$ of $S_{\g}$
at least one of $f$, $f^2$, $f^3$, \ldots, $f^{2\g-3}$, $f^{2\g-2}$ has a fixed-point if $\g \geq3$  and at least one of $f$, $f^2$, $f^3$ has a fixed-point if $\g= 2$. The open problem remained if at least one of $f$, $f^2$ has a fixed-point if $\g= 2$. This conjecture was proved in \cite{Di-Ll}. Note  however, that for the class of orientation-preserving quasi-unipotent homeomorphisms the validity of the conjecture results from the fact that for every $f\colon S_2 \to S_2$ in this class its set of algebraic periods ${\cAP}(f)$ includes either $1$ or $2$ (see Table \ref{table:Surface_genus_2_algebraic_period_orient_preserving}) and from Lefschetz fixed point theorem. Our computations up to genus 30 (see \cite{GMM}) confirm the Nielsen's result and show that it cannot be improved on the algebraic level since one can always find a self-homeomorphism of $S_\g$, $3 \leq \g \leq 30$, whose minimal algebraic period is $2\g-2$.
\end{rema}

\subsection{Computing the minimal genus for a given set of algebraic periods}\label{genus-minimal2}

$\newline$

Algorithm \ref{algorithm} can be used in the brute force method for finding the minimum genus $\g$ of a surface $S_\g$ to realize a given set $\cA \subset \NN$ as the set of algebraic periods of an orientation-preserving self-homeomorphism of $S_\g$.

\begin{prop}\label{proc}
    There is a procedure that for any finite subset $\cA$ of natural numbers determines in finitely many steps the minimal number $\g$ together with Dold coefficients $a_n(f)$ of an orientation-preserving homeomorphisms $f\colon S_\g \to S_\g$ such that $\cAP(f) = \cA$.
\end{prop}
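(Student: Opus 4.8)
The plan is to turn the existence claim into a terminating exhaustive search, by coupling the explicit upper bound for the realizing genus from Proposition~\ref{prop:realizable_genus_for_AP} with the enumeration produced by Algorithm~\ref{algorithm}.

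First I would apply Proposition~\ref{prop:realizable_genus_for_AP} to the given finite set $\cA$. It produces, by the explicit formula \eqref{formula:realizable_genus}, a number $\g_0$ together with an orientation-preserving homeomorphism $f_0\colon S_{\g_0}\to S_{\g_0}$ whose sequence of Dold coefficients is the one described there. The modifications made in that proposition only alter the values $a_{n_0}$ and $a_{n_1}$, and $n_0,n_1$ are chosen among the elements of $\cA$ (the smallest even, resp.\ odd, index of $\cA$), so the support of $(a_n(f_0))$ is exactly $\cA$, i.e.\ $\cAP(f_0)=\cA$. Hence the set of genera realizing $\cA$ as a set of algebraic periods is nonempty, and the minimal such genus $\g_{\min}$ satisfies $\g_{\min}\le\g_0$.

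Next, for each $\g=1,2,\ldots,\g_0$ I would run Algorithm~\ref{algorithm} on input $\g$ to obtain the list $\mathcal L_\g$ of all Dold-coefficient sequences of quasi-unipotent orientation-preserving homeomorphisms of $S_\g$ (the genus-zero case, in which $\cAP(f)=\{1\}$ for every orientation-preserving homeomorphism of $S^2$, being disposed of by inspection). This is a genuinely finite computation: there are finitely many partitions $P=(p_n)$ of $2\g$; for each of them every fiber $\Phi_n=\varphi^{-1}(n)$ of the Euler totient function is finite, with an effectively computable bound on its elements by H.~Gupta \cite{Gupta}, so there are finitely many ways to distribute $p_n$ among $|\Phi_n|$ non-negative summands $r_k$; and the passage from $(r_k)$ to $(a_n)$ via \eqref{a_k_to_r_k} is a finite sum. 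That $\mathcal L_\g$ is \emph{complete}, i.e.\ contains every realizable sequence on $S_\g$, is precisely the content of Theorem~\ref{theorem:correspondence_Dold_coeff_and_roots_of_unity_and_genus}, which establishes the $1$--$1$ correspondence between admissible $(r_k)$ and realizable $(a_n)$.

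Finally, for each $(a_n)\in\mathcal L_\g$ I would read off its support $\{n\colon a_n\neq 0\}$, which by definition is $\cAP(f)$ for the homeomorphism $f$ realizing it, and test it for equality with $\cA$. The procedure returns the least $\g$ for which some $(a_n)\in\mathcal L_\g$ has support exactly $\cA$, together with that sequence $(a_n)$; by the bound $\g_{\min}\le\g_0$ above a success occurs for some $\g\le\g_0$, so the search halts after finitely many steps, and minimality of the first success means this $\g$ equals $\g_{\min}$ (the genus being in any case recoverable from $(a_n)$ through \eqref{genus-ak}). There is no hard analytic content here — the assertion is essentially a bookkeeping consequence of the earlier results — and the only point I would single out as needing real justification is the termination of Algorithm~\ref{algorithm}, which rests on the finiteness and effective computability of the fibers of $\varphi$; the a priori bound $\g_{\min}\le\g_0$ and the completeness of the enumeration $\mathcal L_\g$ are already in hand.
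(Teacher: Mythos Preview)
Your proposal is correct and follows essentially the same approach as the paper: use Proposition~\ref{prop:realizable_genus_for_AP} to obtain an a priori upper bound on the minimal genus, then run Algorithm~\ref{algorithm} for each genus up to that bound and stop at the first success. Your write-up is more detailed than the paper's two-line proof --- in particular you make explicit why the support of the sequence from Proposition~\ref{prop:realizable_genus_for_AP} is exactly $\cA$, why Algorithm~\ref{algorithm} terminates, and why the enumeration is complete --- but the underlying argument is identical.
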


\begin{proof}
    The formula (\ref{formula:realizable_genus}) in Proposition \ref{prop:realizable_genus_for_AP} provides an upper bound $C > 0$ for the minimum genus. Now, for each $\g=1,2,\ldots,C$ using Algorithm \ref{algorithm} determine all possible Dold coefficients and sets of algebraic periods and stop, when you first obtain $\cA$.
\end{proof}

In general, the problem of finding the minimum genus $\g$ for realizing a finite subset $\cA \subset \NN$ as the set of algebraic periods of an orientation-preserving homeomorphism of a surface can be formulated in terms of \emph{integer linear programming}.

\textit{
\label{linear}
	For a given set $\cA \subset \NN$ with $K = \max(\cA)$ minimize the function
	$$
	2\g(  a_n \colon n\in \cA) := 2- \sum_{n\in \cA} na_n,
	$$
	for $a_n \in \ZZ \setminus \{0\}$ and $x,y \in \ZZ$, subject to
	$$
	  \sum_{n \in \cA} a_n \leq 2, \ \ \sum_{2n \in \cA}  {a_{2n}} \leq 0, \ \ldots,\  \sum_{kn \in \cA}  a_{kn} \leq 0,  \ \ldots,\  \sum_{Kn \in \cA}  a_{Kn} \leq 0
	$$
	and
	$$
	\sum_{n \in \cA} a_n - 2x = 0, \ \ \sum_{2n \in \cA} a_{2n} - 2y = 0
	$$
	(the last two equations follows from the parity of $r_1$ and $r_2$).}

Note that the function to be minimized is integer-valued and non-negative (it is twice the genus of a surface), so the solution always exists. Using specialized algorithms for solving integer linear programming problem one can minimize a genus in a better way than using the brute force method presented in Proposition \ref{proc}. However, for small genera the brute force method is sufficient. In the database \cite{GMM} we included lists of sets of algebraic periods for a which a given genus $g=1,\ldots,30$ is minimal.

\section{Dynamical application}\label{transversal}

In this section we show the existence of smooth surface self-maps with a given finite sequence of Dold coefficients for which the number of points of minimal period $k$ is estimated from below by $\vert a_k(f)\vert$. It is a consequence of the results of previous sections combined with some classical facts.

We begin with a well-known but useful fact, which states that
in the statements of all facts presented in previous sections a  homeomorphism  can be replaced by a diffeomorphism of a surface.
This follows from \cite[Theorem 1.10]{Farb-Marg} which says that every homeomorphism of a surface is isotopic to a diffeomorphism.

 To formulate consequences of our main result we make use of the notion of transversal maps.
  \begin{defi}[\cite{Dold}]\label{defi:transversal}
Let $f\colon  \cU \to X$
be a~$C^1$-map of an open subset $\cU$ of a
manifold  $X$, $\dim X=d$. We say that
 $f$ is transversal,   if for
any $m \in {\bn}$ and $x \in X$ such that $f^m(x)=x$, 
the condition
$
1 \notin  \sigma(Df^m(x))
$
holds, where $\sigma(Df^m (x))$ denotes the spectrum of the derivative of $f^m$ at $x$.
 \end{defi}
 This notion is slightly more general than the notion of hyperbolic maps (cf. \cite{Franks}).

\zz{ The difference between hyperbolic  transversal is the following.
   In the first case  $\sigma(Df(x))\cap \{\vert z \vert =1\} = \emptyset$. In the second case we require only  $\sigma(Df(x))\cap  \mathcal{C}  = \emptyset$, with
   $\mathcal{C}= {\underset{d\geq 1}\bigcup} \mathcal{C}_m$, where $ \mathcal{C}_m  \subset \{\vert z \vert =1\}$ is the set of roots of unity of degree $m$.
The set of all transversal maps is denoted by  $C^1_T(\cU \to X)$, and the set of all hyperbolic maps $C^1_H( \cU \to X)$ respectively. Obviously $C^1_H( \cU \to X)  \subset C^1_T(\cU \to X)$.}

A well-known  geometric property of a transversal, thus a hyperbolic, map states that
for any map  $f\in C^1_T(X)$,  and every $m\in\bn$ the set $P^m(f)=\{x\in X: f^m(x)=x\}$
consists of isolated points and thus is finite  if $X$ is closed (cf.  \cite[Proposition 5.7]{Dold} or  \cite{Seidel}).

\zz{The above proposition states that each set $P^m(f)$, thus each  $P_m(f)= P^m(f) \setminus {\underset{k|m, k<m}{\,\bigcup\,}} P^k(f) $ is finite. But the entire set of periodic points $P(f)= {\underset{m}{\,\bigcup\,}} P^m(f) = {\underset{m}{\,\bigcup\,}} P_m(f)$ may be infinite.}

 The main property of the  class of transversal maps is
given in the following theorem which is widely used but whose
rigorous proof is complicated (see \cite{Dold} if $X= \br^d$, and \cite{Seidel} for the general case, also \cite{Jez-Mar} for an exposition).

\begin{teo}\label{thm:Seidel}
The set  $C^\infty_T(\cU, X)= C^1_T(\cU,X) \cap C^\infty(\cU,X)$   is generic in $C^0({\cU}, X)$,
i.e.  $
C^\infty_T(\cU, X) =\bigcap_{n=1}^{\infty} G_n$ where each
$G_n$ is open and  dense in $C^0({\cU}, X)$.
 In particular every map $ f\colon \cU\to X$ is homotopic to a~transversal map $h\colon  \cU\to X$.
\end{teo}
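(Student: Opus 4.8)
The plan is to run a Kupka--Smale-type genericity argument, based on Thom's parametric transversality theorem and organised as an induction over the period $m$.

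\emph{Reduction to the smooth category.} Mollification in local charts shows that $C^\infty(\cU,X)$ is $C^0$-dense, and $C^0$-close self-maps of $X$ are homotopic; hence, once the transversal smooth maps are known to be residual --- in particular dense --- in $C^\infty(\cU,X)$ with its (Whitney) topology, which is finer than the $C^0$ topology, both the $G_\delta$ assertion and the concluding ``in particular'' clause follow at once. For non-compact $\cU$ one works with the Whitney topology and an exhaustion of $\cU$ by compacta; in the intended application $X=S_\g$ is closed, which makes the fixed-point set $\Fix(f^m)$ of any map transversal at period $m$ finite, by the inverse function theorem (cf.\ \cite[Proposition~5.7]{Dold}).

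\emph{The sets $G_m$.} For $m\in\bn$ let $G_m\subset C^\infty(\cU,X)$ be the set of maps with $1\notin\sigma(Df^m(x))$ whenever $f^m(x)=x$, so that $C^\infty_T(\cU,X)=\bigcap_{m\ge1}G_m$. \emph{Openness}: if $f\in G_m$, then $\Fix(f^m)$ is finite and $Df^m(x)-I$ is invertible at each of its points; a sufficiently $C^1$-small perturbation $g$ cannot have $m$-periodic points outside small neighbourhoods of $\Fix(f^m)$ (a displacement/degree argument, using that $X$ is closed) and keeps $Dg^m-I$ invertible near those, so $g\in G_m$. \emph{Density}: here one proves the stronger assertion, by induction on $m$, that an arbitrarily $C^\infty$-small perturbation of a given $f$ may be chosen so that at every orbit of period exactly $m$ the matrix $Df^m(x_0)$ has \emph{no} eigenvalue that is a root of unity. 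This strengthening is what closes the induction: if $Df^k(x_0)$ has no root-of-unity eigenvalue then neither does $\big(Df^k(x_0)\big)^{m/k}=Df^m(x_0)$ for any multiple $m$ of $k$, so the orbits of period $<m$ automatically meet the period-$m$ requirement and are, moreover, isolated nondegenerate fixed points of $f^m$; consequently the set of period-exactly-$m$ points is compact and bounded away from the finitely many lower-period orbits, so the stage-$m$ perturbation can be supported in a neighbourhood of it that leaves all earlier orbits untouched.

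\emph{The key perturbation and conclusion.} Along an orbit $x_0,\dots,x_{m-1}$ of exact period $m$ the points are pairwise distinct, so a perturbation of $f$ localised near a single point of the orbit changes $Df^m(x_0)$ by an essentially arbitrary invertible factor; feeding this freedom into Thom's parametric transversality theorem applied to the evaluation map $(x,g)\mapsto j^1(g^m)(x)$, and requiring transversality to the (stratified) subvariety of $1$-jets $(x,x,A)$ for which $A$ has a fixed root of unity $\zeta$ as an eigenvalue --- a set of codimension $>\dim X$ for each of the countably many $\zeta$ --- forces $j^1(g^m)$, for generic $g$, to avoid all of them, which is exactly the strengthened conclusion. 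Baire's theorem in the completely metrizable space $C^\infty(\cU,X)$ then gives that $C^\infty_T(\cU,X)=\bigcap_m G_m$ is residual, hence dense, and the reduction of the first paragraph turns this into the stated genericity and into the fact that every $f$ is homotopic to a transversal map. \textbf{The main obstacle} is the density step when $\Fix(f^m)$ is positive-dimensional: there the single-orbit trick is unavailable and one must invoke the full parametric transversality theorem, while carefully tracking that arranging period-$m$ non-degeneracy does not spoil what was set up for smaller periods --- which is precisely where the proofs in \cite{Dold,Seidel} become technical.
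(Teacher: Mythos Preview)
The paper does not actually prove this theorem: it quotes it as a known (and ``complicated'') fact, citing \cite{Dold} for $X=\br^d$, \cite{Seidel} for the general case, and \cite{Jez-Mar} for an exposition, and only adds the one-line observation that the ``in particular'' clause follows because maps that are $C^0$-close are homotopic. Your proposal therefore goes well beyond the paper by attempting to unpack the Kupka--Smale-type argument underlying those references, and the outline you give --- induction on the period, parametric transversality for the $1$-jet of $f^m$, the strengthening to ``no root-of-unity eigenvalue'' so that lower periods are preserved --- is indeed the skeleton of the Dold/Seidel proof. The density in $C^0$ and hence the homotopy conclusion are correctly obtained this way.

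There is, however, a genuine gap in your reduction step. You assert that once $C^\infty_T$ is residual in $C^\infty(\cU,X)$ with the Whitney topology, ``the $G_\delta$ assertion follows at once''. It does not: the Whitney topology is \emph{finer} than the $C^0$ topology, so a Whitney-open set need not be $C^0$-open, and a countable intersection of Whitney-open dense sets need not be a countable intersection of $C^0$-open dense sets. Density does transfer downward (dense in the finer topology implies dense in the coarser one on the same set, and $C^\infty$ is $C^0$-dense), which is all the paper actually uses; but the specific $G_\delta$-in-$C^0$ claim in the theorem statement is not recovered by your argument. Your openness paragraph in fact only establishes $C^1$-openness of $G_m$, not $C^0$-openness, and this gap is not closed anywhere. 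If you want the full statement as written, you would have to argue separately that the transversality condition at each period can be expressed via $C^0$-open sets --- or, more honestly, treat the $G_\delta$-in-$C^0$ formulation as part of what is being cited from \cite{Seidel,Jez-Mar}.
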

Note that the second part of the statement  follows from the
first, because two maps close in  $C^0$-topology   are
homotopic. In particular this theorem applies to self-maps $C^0(X)$ of a smooth  closed manifold $X$.

\zz{Finally, all mappings in $C^1_T(X)$, thus  $C^1_H(X)$, have the following geometric property which states that non-vanishing $a_n(f)$, $n$-odd, implies that $P_n(f)\neq \emptyset$.
 We have}
\zz{ \begin{teo}\label{thm:periodic for transversal} {\phantom{newline}}
\newline   Let $f\colon X\to X$  be a transversal map, e.g. a hyperbolic map

  If $a_n(f) \neq 0\,$ then
$ \;\begin{cases} P_n(f) \cup  P_{\frac{n}{2}}(f) \neq  \emptyset \;  \, {\text{if}}\; n \; {\text{is even}}, \cr
P_n(f) \neq  \emptyset \; \; {\text{if}}\; n\; {\text{is odd}}.
\end{cases}
$
\newline
Moreover
$ \;\begin{cases} \vert P_n(f) \cup  P_{\frac{n}{2}}(f) \vert  \geq \vert a_n(f)\vert  \;  \, {\text{if}}\; n \; {\text{is even}}, \cr
\vert P_n(f) \vert  \geq  \vert a_n(f) \vert   \; \; {\text{if}}\; n\; {\text{is odd}}.
\end{cases}
$
\end{teo}
\begin{proof}

The first part of statement is shown in \cite{Franks} for the hyperbolic maps and in \cite{Dold} for transversal maps
 (see also {\cite[Corollary 3.3.10]{Jez-Mar}} for its presentation).
  The second part of statement follows from formula \cite[3.3.9]{Jez-Mar} of \cite[Proposition 3.3.8]{Jez-Mar}.
 \end{proof}}

\zz{Combining Theorems  \ref{theorem:correspondence_Dold_coeff_and_roots_of_unity_and_genus},  \ref{thm:Seidel}, \ref{thm:periodic for transversal}}

We are in a position to formulate the dynamical (geometrical) consequence of our work. Denote by $P_n(f)= P^n(f)\setminus {\underset{k|n,\, k<n} \bigcup} P^k(f)$ the set of points with the minimal period equal to~$n$, and by ${\rm Per}(f)=\{n\in \mathbb{N}: P_n(f)\neq \emptyset\}$ the set of minimal periods of $f$.

\begin{teo}\label{thm:main geometrical}
Let $\cD = (a_k)$ be a sequence of integers with a finite support $\cA$. Suppose that $\cD$ satisfies realizability conditions of Theorem \ref{theorem:correspondence_Dold_coeff_and_roots_of_unity_and_genus}.
Then there exist an orientable surface $S_\g$ of genus $\g$ given by the formula (\ref{genus-ak}) and a preserving orientation  diffeomorphism $f\colon S_\g \to S_\g\,$, $f\in C^1(S_\g)$,  such that $\cD(f) = \cD$ is the sequence of Dold coefficients of $f$.

Furthermore for every  transversal map  $f_T\in C^1_T(S_\g)$ homotopic to $f$  we have 
$$
\vert P_n(f_T) \cup  P_{\frac{n}{2}}(f_T) \vert  \geq \vert a_n(f)\vert  \;  \, {\text{if}}\; n \; {\text{is even}}\;\;\;\text{and} \;\; \;
     \vert P_n(f_T) \vert  \geq  \vert a_n(f) \vert   \; \; {\text{if}}\; n\; {\text{is odd}}.
$$
In particular $ \cA_{odd} = \cAP_{odd}(f) \subset \per(f_T)$.
\end{teo}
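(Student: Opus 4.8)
The plan is to assemble \autoref{thm:main geometrical} from three ingredients already available in the excerpt, and the only real work is in organizing the quantifiers correctly. First I would invoke \autoref{theorem:correspondence_Dold_coeff_and_roots_of_unity_and_genus}: since $\cD = (a_k)$ satisfies the realizability conditions there, the corresponding sequence $(r_k)$ given by (\ref{r_k_to_a_k}) is non-negative with $r_1,r_2$ even, and $\cD$ is realized by a quasi-unipotent orientation-preserving homeomorphism $h$ of $S_\g$ with $\g$ determined by (\ref{genus-ak}). Then I would apply the remark opening \autoref{transversal} — namely \cite[Theorem 1.10]{Farb-Marg} — to replace $h$ by an isotopic, hence homotopic, diffeomorphism $f \in C^1(S_\g)$; isotopy preserves $H_1(f) = H_1(h)$ and therefore all Lefschetz numbers of iterates, so $\cD(f) = \cD(h) = \cD$. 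This settles the first paragraph of the conclusion.

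Next I would address the transversal part. Let $f_T \in C^1_T(S_\g)$ be any transversal map homotopic to $f$ (such maps exist and are generic by \autoref{thm:Seidel}, though for the statement we only need that $f_T$ is \emph{given} to be homotopic to $f$). Since Dold coefficients are homotopy invariants (as recalled in the introduction, $a_n$ depends only on the sequence $L(f^n)$, which is a homotopy invariant), we have $a_n(f_T) = a_n(f)$ for all $n$. Now I would apply the theorem on periodic points of transversal maps — stated in the commented-out \texttt{thm:periodic for transversal} but citable from \cite{Dold}, \cite{Franks}, and \cite[Corollary 3.3.10, Proposition 3.3.8]{Jez-Mar} — which gives exactly
\[
\vert P_n(f_T) \cup P_{n/2}(f_T) \vert \geq \vert a_n(f_T) \vert = \vert a_n(f) \vert
\]
when $n$ is even, and $\vert P_n(f_T) \vert \geq \vert a_n(f) \vert$ when $n$ is odd. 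For the final sentence, if $n \in \cA_{odd}$ then $a_n(f) \neq 0$, so $\vert P_n(f_T)\vert \geq 1$, i.e. $P_n(f_T) \neq \emptyset$, hence $n \in \per(f_T)$; thus $\cA_{odd} \subset \per(f_T)$.

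I do not expect a genuine obstacle here, since every component is quoted from earlier in the paper or from the literature; the statement is essentially a packaging of \autoref{theorem:correspondence_Dold_coeff_and_roots_of_unity_and_genus} (algebraic realization) $+$ \cite[Theorem 1.10]{Farb-Marg} (homeomorphism $\leadsto$ diffeomorphism) $+$ the transversal-periodic-point theorem. The one point requiring a little care is to make sure the quantifier structure is faithfully reproduced: the diffeomorphism $f$ is produced \emph{first} with $\cD(f) = \cD$, and then the estimate is asserted for \emph{every} transversal $f_T$ homotopic to $f$, using only that homotopy invariance transports $a_n(f)$ to $a_n(f_T)$. The mild subtlety — whether one needs $f$ itself to be transversal — is avoided: one does not, because $f_T$ ranges over the homotopy class and the estimate is stated for $f_T$, with the right-hand side $\vert a_n(f)\vert = \vert a_n(f_T)\vert$. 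I would close by remarking that \autoref{thm:Seidel} guarantees such $f_T$ actually exist (indeed form a residual set), so the conclusion is non-vacuous.
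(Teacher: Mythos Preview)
Your proposal is correct and follows essentially the same approach as the paper's own proof: invoke Theorem~\ref{theorem:correspondence_Dold_coeff_and_roots_of_unity_and_genus} for the realization, upgrade to a diffeomorphism via \cite[Theorem 1.10]{Farb-Marg}, and then cite the transversal periodic-point estimates from \cite{Dold}, \cite{Franks} and \cite[Proposition 3.3.8, Corollary 3.3.10]{Jez-Mar}. If anything, you are more careful than the paper about the quantifier structure (the estimate holds for \emph{every} transversal $f_T$ in the homotopy class, with Theorem~\ref{thm:Seidel} needed only for non-vacuity).
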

\begin{proof}
The existence of $f$ follows from Theorem \ref{theorem:correspondence_Dold_coeff_and_roots_of_unity_and_genus}. By Theorem \ref{thm:Seidel} we can find a transversal map $f_T$ in the homotopy class of $f$.
The second part of the statement follows from results of  \cite{Franks} for the hyperbolic maps and \sout{in} from those of \cite{Dold} for transversal maps and was also shown independently by  S. N. Chow, J. Mallet-Paret and J. A. Yorke
 (see \cite[Proposition 3.3.8]{Jez-Mar} and  {\cite[Corollary 3.3.10]{Jez-Mar}} for its presentation).
\end{proof}
\zz{The above let us formulate the analytical (dynamical) consequence of Theorem \ref{thm:main1}. Put $\cA_{od}(f) = \cA(f)\cap \{2\bn -\}$, and $\cA_{ev}(f)=\cA(f)\cap \{2\bn\}$.
\begin{coro}\label{coro: main analytical}
Let $\cA \subset \bn$ be a finite subset of natural numbers and $f\colon S_\g\to S_\g$ be a preserving, or reversing, orientation  Morse--Smale diffeomorphism of oriented surface of genus $\g$ given by Theorem \ref{thm:main1}, or correspondingly a Morse--Smale diffeomorphism of non-orientable surface $N_\g$   given by Theorem \ref{thm:main1},  such that $\cA(f)= \cA$.

Then for every $n \in \cA_{od}(f)  $ and every $ h \in C_T(S_\g)$, respectively $h \in C_T(N_\g)$, with  $h \sim f$ we have $n\in \per(h)$.
For every $n \in \cA_{ev}(f)  $ and every $ h \in C_T(S_\g)$ with $h \sim f$,  respectively $h \in C_T(N_\g)$  with $h \sim f$, we have $(\{n\} \cup  \{\frac{n}{2}\} )\cap \per(h) \neq \emptyset $.
\end{coro}}

\section{Final discussion}\label{discussion}

At the end, we would like to display the difference between  this and the previous paper \cite{GMMM} and summarize our approach. Here the geometrical realization of a preserving orientation homeomorphism is a direct consequence of the classical Burkhardt theorem which states that every symplectic matrix $A\in Sp(2\g, \bz)$ can be realized as $H_1(f)$ for some $f$. On the other hand, the homotopy (thus isotopy) classes of  Morse--Smale diffeomorphisms correspond to the classes $T_1$ and $T_2$ of the Nielsen--Thurston classification of homeomorphisms of closed compact surfaces \cite{Farb-Marg}. Consequently, the realization of a given finite set $\cA$ as $\cAP(f)$ of a Morse--Smale diffeomorphism  demands a geometric construction which requires "a space", i.e.  the corresponding genus $\g$ should be big enough  (cf. \cite{GMMM}, see also \cite{Gri-Med-Poch} for the recent developments in the study of Morse--Smale diffeomorphisms f). We must emphasize that it is not sure if the epimorphism $\Psi\colon Mod(S_g) \to Sp(2\g,\bz)$  maps $\{T_1 \cup T_2\}$ onto a subset of symplectic group consisting of all quasi-unipotent matrices. The converse hypothesis that  the counter-image $\Psi^{-1}$  of quasi-unipotent matrices is contained in  $T_1 \cup T_2$ had been conjectured by J. Nielsen in \cite{Niel2} where he showed that the image  $\Psi(T_1\cup T_2)$  is contained in the set of   quasi-unipotent matrices. This conjecture was disproved
by  W. Thurston (see \cite{Farb-Marg}) who gave an example of pseudo-Anosov diffeomorphism $f$ such that $H_1(f)= {\rm id}$, i.e.  belonging to the class $T_4$ of the Nielsen--Thurston classification.  In particular it has infinitely many periodic points. The asymptotic growth rate of periodic points of a pseudo-Anosov diffeomorphism $f$ of a hyperbolic surface is equal to the asymptotic Nielsen number, which is equal to the value  of stretching factor of $f$ (cf. \cite{BoJia}).    This
shows that the knowledge about periodic points encoded in the matrix $H_1(f)$ is rather limited in general.

As a result, the dynamical contribution of our work is to show the existence of a (transversal) self-map $f$ of an orientable surface which has periodic points of periods $n$ if $n$ is odd, and of  at least one of $n$ or $n/2$ if $n$ is even with $n$ belonging to a given $\cA$. Moreover the cardinalities of $P_{n}(f)$ are estimated from below for all transversal maps homotopic to $f$ by $\vert a_{n}\vert $.


 The main \zz{technical} contribution of this work is the algebraic construction which gives  Theorem \ref{roots} and its  specifications Theorem \ref{theorem:correspondence_Dold_coeff_and_roots_of_unity_and_genus} and Proposition \ref{prop:realizable_genus_for_AP}. Additionally, we present the proof of main theorem in a way which has algorithmic character and can lead to an explicit construction of such  a map. 
 As the reader may note, there are three steps in a  procedure of finding such a map.

\begin{itemize}
\item{Algebraic: for a given sequence $\cD = (a_n)$ of Dold coefficients with finite support $\cA$ find a symplectic $2\g \times 2\g$  matrix $A$,  of possible small size  such that for every homeomorphism $f\colon S_\g\to S_g$ with $H_1(f)=A$ we have $\cD(f) =\cD$ and $ \cAP(f)= \cA$. This part is the main subject studied in our work.}
  \item{Geometric: for a given symplectic matrix $A\in Sp(2\g, \bz)$ find a homeomorphism, thus diffeomorphism $f\colon S_\g \to S_\g$ such that $\Psi(f) = A$.  In the original proof of Burkhardt (cf. \cite[Thm 6.4]{Farb-Marg}) for each elementary symplectic matrix ${\bf e}$, is shown a construction of a homeomorphism $F\colon S_\g \to S_\g$, called a Burkhardt generator, such that $\Psi(F)= {\bf e}$. Since elementary matrices form a set of generators of $Sp(2\g, \bz)$, it is enough to represent $A$ as a product of elementary matrices, but it is not easy procedure in general.}
          \item{Analytic: for a given homeomorphism $f\colon S_\g \to S_\g$ find in an effective way a transversal map  $\tilde{f}: S_\g \to S_\g$ homotopic to $f$, e.g. close to $f$.}
  \end{itemize}

  The latter appears to be an interesting  question, the resolution of which could render the described procedure fully effective.

\section{Appendix}\label{AppendixYang}

In  Appendix we describe an effective construction of a quasi-unipotent symplectic matrix with a given characteristic polynomial $f$ of degree $2\g$ which is a product of cyclotomic polynomials with even multiplicites of $\varphi_1(x) = x-1$ and $\varphi_2(x) = x+1$ (cf. Theorem \ref{roots}). The existence of such a matrix is provided by the following theorem.
\begin{theorem}[Q. Yang, {\cite[Theorem 1]{Yang2}}]
		Let $f(x) \in \mathbb{Z}[x]$ be a palindromic monic polynomial of degree $2\g$. Then there exists a symplectic matrix $A \in {\rm Sp}(2\g,\mathbb{Z})$ whose characteristic polynomial $\chi_A = f$. \hbx
	\end{theorem}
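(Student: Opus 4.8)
The plan is to undo the substitution $y = x + x^{-1}$, reduce to a purely $\g$-dimensional problem, and then manufacture the symplectic matrix by restriction of scalars from a rank-$\g$ commutative extension of $\ZZ$. First, the elementary reduction: defining monic integer polynomials by $p_0 = 2$, $p_1 = y$ and $p_{m+1} = y p_m - p_{m-1}$ one has $x^m + x^{-m} = p_m(x + x^{-1})$, and expanding $x^{-\g}f(x)$ in the $p_m$ shows that $f(x) = x^{\g} g(x + x^{-1})$ for a unique monic $g \in \ZZ[y]$ of degree $\g$; conversely every such $g$ gives back a monic palindromic $f$ of degree $2\g$. So it is enough to prove that for an arbitrary monic $g \in \ZZ[y]$ of degree $\g$ there is a symplectic $A \in {\rm Sp}(2\g,\ZZ)$ with $\chi_A(x) = x^{\g} g(x + x^{-1})$.

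For the construction, put $R = \ZZ[y]/(g(y)) = \ZZ[\theta]$, a free $\ZZ$-module of rank $\g$ on which multiplication by $\theta$ has characteristic polynomial $g$. On the free $R$-module $R^2$ the map $\beta = \left(\begin{smallmatrix} 0 & -1 \\ 1 & \theta\end{smallmatrix}\right)$ satisfies $\det_R\beta = 1$, so it preserves the standard alternating $R$-bilinear form $\langle v, w\rangle_0 = v_1 w_2 - v_2 w_1$ on $R^2$. Restricting scalars to $\ZZ$ identifies $R^2$ with $\ZZ^{2\g}$ and $\beta$ with an integer $2\g \times 2\g$ matrix $A'$, and for every $\ZZ$-linear functional $\phi \colon R \to \ZZ$ the matrix $A'$ preserves the alternating $\ZZ$-bilinear form $\langle -,-\rangle_\phi := \phi \circ \langle -,-\rangle_0$. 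Its characteristic polynomial is already pinned down: since the characteristic polynomial of a restriction of scalars is the norm over $\ZZ[x]$ of the characteristic polynomial computed over $R$, and $\det_R(xI_2 - \beta) = x^2 - \theta x + 1$, one gets $\chi_{A'}(x) = \prod_{g(\lambda) = 0}(x^2 - \lambda x + 1) = x^{\g} g(x + x^{-1}) = f(x)$, the product running over the roots of $g$ with multiplicity and using $x^2 - \lambda x + 1 = x\bigl((x + x^{-1}) - \lambda\bigr)$. So everything reduces to choosing $\phi$ making $\langle -,-\rangle_\phi$ \emph{unimodular} over $\ZZ$: then a change of $\ZZ$-basis taking its Gram matrix to $\Omega$ conjugates $A'$ into the standard ${\rm Sp}(2\g,\ZZ)$, which proves the theorem.

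The choice of $\phi$ is the crux. In the $\ZZ$-basis $(\theta^i, 0)$, $(0, \theta^j)$ with $0 \le i, j \le \g - 1$ of $R^2$, the Gram matrix of $\langle -,-\rangle_\phi$ is $\left(\begin{smallmatrix} 0 & H \\ -H^T & 0\end{smallmatrix}\right)$ with the Hankel matrix $H = \bigl[\phi(\theta^{i+j})\bigr]_{0 \le i,j\le \g-1}$, so unimodularity is exactly $\det H = \pm 1$. Take $\phi$ with $\phi(\theta^k) = 0$ for $0 \le k \le \g - 2$ and $\phi(\theta^{\g-1}) = 1$ (its values on higher powers of $\theta$ being then determined by $g$ and irrelevant here). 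Since $i, j \le \g - 1$, the entries of $H$ with $i + j < \g - 1$ vanish while those on the anti-diagonal $i + j = \g - 1$ all equal $1$, so $H$ is anti-triangular with units on its anti-diagonal and $\det H = \pm 1$ automatically; this finishes the proof.

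The step I expect to be genuinely delicate is exactly this integrality. Over $\QQ$ any matrix whose characteristic polynomial is separable, palindromic and free of the eigenvalues $\pm 1$ preserves a nondegenerate alternating form, by a soft pairing of the $\lambda$- and $\lambda^{-1}$-eigenspaces; the real content here is to keep everything integral — to produce an integral symplectic basis — and the special functional $\phi$ is precisely what makes this work uniformly, with no irreducibility or separability assumption on $g$, so that every monic palindromic $f$ (in particular every product of cyclotomic polynomials, as needed for Theorem \ref{roots}) is handled at once. The other ingredients — the Chebyshev-type recursion of the first step and the norm computation of $\chi_{A'}$ — are routine. In the write-up I would also unwind $A'$ explicitly as an integer block matrix built from the companion matrix of $g$, to keep the construction effective, and illustrate it on a small cyclotomic example such as $f = \varphi_{12}$, where $g(y) = y^2 - 3$, the functional is $\phi(1) = 0$, $\phi(\theta) = 1$, and one obtains a $4 \times 4$ symplectic matrix of order $12$.
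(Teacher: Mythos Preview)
Your proof is correct and takes a genuinely different route from the paper's appendix. The paper follows Yang's original two-step strategy: first settle the irreducible case using the arithmetic of the number field $K=\QQ(\xi)$ --- the trace pairing, the dual basis $\beta'$, and the element $\Delta=\xi^{1-\g}f'(\xi)$ (essentially the different) --- to produce a unimodular skew form preserved by multiplication by $\xi$; then handle a general palindromic $f$ by factoring into irreducibles and taking a direct sum of the corresponding symplectic blocks, with $(x-1)^2$ and $(x+1)^2$ treated separately by $\pm I_2$. Your argument instead collapses the problem to half degree via $f(x)=x^{\g}g(x+x^{-1})$, works over the (possibly non-reduced) ring $R=\ZZ[y]/(g)$, realises $A$ as restriction of scalars of a $2\times 2$ matrix over $R$, and obtains unimodularity of the induced alternating form by the elementary observation that a suitable functional $\phi$ makes the Hankel Gram matrix anti-triangular with units on the anti-diagonal. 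What your approach buys is uniformity and elementarity: no number-field machinery, no irreducibility or separability hypothesis, no case split for $\pm 1$, and no direct-sum reassembly --- every monic palindromic $f$ is handled in one stroke. What the paper's decomposition buys is a closer fit with the spectral viewpoint of Theorem~\ref{roots} and Section~\ref{list}: each cyclotomic factor $\varphi_k$ contributes its own explicit symplectic block, so the correspondence between the multiplicity data $(r_k)$ and the matrix $A$ is visible at the level of the block structure, which is convenient for the effective enumerations carried out later.
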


	The proof is divided into two steps. First, the theorem is showed if $f$ is an irreducible polynomial \cite[Corollary 1]{Yang1}. Then, the result is easily extended to all palindromic monic  polynomials.

    Note that if we change the order in the standard symplectic basis (cf. Section \ref{symplectic_basis}) to $(a_1,b_1,\ldots,a_\g,b_\g)$, then the matrix of the symplectic form is block diagonal consisting of $\g$ times
    $$ \Omega = \begin{bmatrix}
	0 & 1 \cr
	-1 & 0
	\end{bmatrix}.
	$$
    From this it is easily seen that the direct sum of symplectic matrices is symplectic. Thus it is sufficient to show a construction of symplectic matrix for each cyclotomic polynomial $\varphi_k(x)$, $k\geq 3$ ($(\varphi_1(x))^2$ is realized by $I_2$ and $(\varphi_2(x))^2$ by $-I_2$). The below considerations are based on and are adapted version of \cite{Yang1}.

    Fix a cyclotomic polynomial $f=\varphi_k$, $k\geq 3$, so it is palindromic monic and irreducible of degree $\deg(f) = 2\g$. Let $\xi$ be a root of $f$ and $K = \QQ(\xi)$. The extension $K/\QQ$ is Galois of degree $2\g$, and so ${\rm Gal}(K/\QQ) = \{\sigma_1,\ldots,\sigma_{2\g}\}$ (it is abelian and isomorphic to multiplicative group $(\ZZ/k)^\times$). Assume $\sigma_1 = {\rm id}_K$ and $\sigma_2(\xi) = 1/\xi$, since $1/\xi$ is also a root of $f$ (this is clear for cyclotomic polynomials, but generally, because of palindromicity of $f$). Define the conjugation in $K$ by using $\sigma_2$, i.e. $\widetilde{x} := \sigma_2(x)$ for $x\in K$. Moreover, the notation $x^{(i)} := \sigma_i(x)$ is used.

	\subsection{Construction of a symplectic matrix with a given cyclotomic polynomial as its characteristic polynomial}\label{constr}

	\begin{enumerate}
		\item Consider the basis $\beta = (\beta_1,\ldots,\beta_{2\g})^T := (1,\xi,\ldots,\xi^{2\g-1})^T$ of $\mathbb{Z}[\xi]$ and the conjugated basis $\widetilde{\beta} = (1,\widetilde{\xi},\ldots,\widetilde{\xi}^{2\g-1})^T$ of  $\widetilde{\mathbb{Z}[\xi]} = \ZZ[\widetilde{\xi}]= \mathbb{Z}[\xi]$ (because $1/\xi = (1-f(\xi))/\xi \in \ZZ[\xi]$ since the constant term is $f(0)=1$).
		
		\item Take the dual basis $\beta' = (\beta'_1,\ldots,\beta'_{2\g})^T$ to $\beta$ with respect to the trace
		$$
		Tr(x) = \sum_{i=1}^{2\g} x^{(i)},
		$$
		i.e. $Tr(\beta_i\beta'_j) = \delta_{ij}$. The trace $Tr \colon K \to \QQ$ is a $\QQ$-linear homomorphism such that if $x \in \ZZ[\xi]$, then $Tr(x) \in \ZZ$.
		
		\item It is known that $\ZZ[\xi] = \Delta\ZZ[\xi]'$, where $\Delta := \xi^{1-g}f'(\xi)$ and $f'$ is the derivative of $f$. We take a matrix $M\in {\rm GL}(2\g,\ZZ)$ such that $M\widetilde{\beta} = \Delta \beta'$. It can be shown that $M$ is skew-symmetric, i.e. $M^T = -M$.
		
		\item Because $M$ is unimodular, it has all invariant factors $1$, the same as $\Omega$, and so by \cite[Theorem IV.3]{Newman} these two matrices are congruent, i.e. there is a matrix $Q \in {\rm GL}(2\g,\ZZ)$ such that $M = Q^T \Omega Q$ (in fact, $M$ is non-singular and skew-symmetric, so it defines a symplectic form, and so existence of the matrix $Q$ is provided by the choice of Darboux basis for a symplectic space determined by $M$, and the algorithm for the construction of Darboux basis shows that $Q$ can have integer coefficients).
		
		\item Take $\alpha = Q\beta$. Then $\xi\alpha$ is another basis for $\ZZ[\xi]$, and so there is a matrix $A \in {\rm GL}(2\g,\ZZ)$ such that $A\alpha = \xi \alpha$. Therefore $\xi$ is an eigenvalue of $A$ and because of irreducibility of $f$ and size of $A$, $\chi_A = f$. It is proven in \cite{Yang1} that $A\in {\rm Sp}(2\g,\mathbb{Z})$.
	\end{enumerate}
		
	\begin{exem}
	    Let $f(x) = \varphi_3(x) = x^2 + x +1$ be the 3rd cyclotomic polynomial and $\xi = e^{2\pi i/3} = \frac{-1+i\sqrt{3}}{3}$.
	\end{exem}

      Note that $\widetilde{\xi} = 1/\xi = \xi^2 = - \xi -1$. Moreover, ${\rm Gal}(K/\QQ) = \{\sigma_1,\sigma_{2}\}$.
    \begin{enumerate}
        \item  Take the basis $\beta = (1,\xi)^T$ of $\ZZ[\xi]$ and the conjugated basis $\widetilde{\beta} = (1,-1-\xi)^T$.

      \item The trace $Tr$ is given by
      $$
      Tr(a+b\xi) = a+b\xi + \widetilde{a+b\xi} = a+b\xi + a +b(-1-\xi) = 2a-b.
      $$
	   If we consider a bilinear form $Tr \colon K \times K \to \QQ$ given by $(x,y) \mapsto Tr(x\cdot y)$, then its matrix in the basis $\beta$ is
		$$
        M_{Tr} = \begin{bmatrix}
    	2 & -1 \cr
    	-1 & -1
    	\end{bmatrix}.
	   $$

    Therefore the dual basis $\beta'$ satisfies $(\beta')^T M_{Tr} \beta = I_{2}$, so in the basis $\beta$, the dual vectors $\beta'_i$ are columns of the matrix
		$$
        M_{Tr}^{-1} = \begin{bmatrix}
    	1/3 & -1/3 \cr
    	-1/3 & -2/3
    	\end{bmatrix},
	   $$
    so $\beta' = (\frac{1}{3} - \frac{1}{3}\xi,-\frac{1}{3}-\frac{2}{3}\xi)^T$

 \item Compute $\Delta = \xi^{1-\g}f'(\xi) = 2\xi+1$ and $\Delta \beta' = (1+\xi,1)^T$, since e.g. $(2\xi+1)\cdot (\frac{1}{3} - \frac{1}{3}\xi) = \frac{1}{3}(2\xi+1-2\xi^2-\xi) = \frac{1}{3}(\xi+1+2+2\xi) = 1 + \xi$.

 Now, to find $M$ such that $M\widetilde{\beta} = \Delta\beta'$, write these vectors as matrices in the basis $\beta$, getting
		$$
        M_{\widetilde{\beta}} = \begin{bmatrix}
    	1 & 0 \cr
    	-1 & -1
    	\end{bmatrix}
     \text{\ \  and \ \ }
        M_{\Delta\beta'} = \begin{bmatrix}
    	1 & 1 \cr
    	1 & 0
    	\end{bmatrix},
	   $$
    so
    $$
    M = M_{\Delta\beta'}\cdot M_{\widetilde{\beta}}^{-1} = \begin{bmatrix}
    	0 & -1 \cr
    	1 & 0
    	\end{bmatrix}.
    $$

 \item Since $M^T = -M$, we change the basis to Darboux basis producing a matrix $Q$ such that $M = Q^T\Omega Q$. In this case, we just need to change the order of columns and rows using matrix
 $$
 Q = \begin{bmatrix}
    	0 & 1 \cr
    	1 & 0
    	\end{bmatrix}.
 $$

    \item Now, take a new basis $\alpha = Q\cdot\beta = (\xi,1)^T$ and consider $\xi\alpha = (-1-\xi,\xi)^T$. Again, we write the bases as matrices in the basis $\beta$:
        $$
        M_{\alpha} = \begin{bmatrix}
    	0 & 1 \cr
    	1 & 0
    	\end{bmatrix}
     \text{\ \  and \ \ }
        M_{\xi\alpha} = \begin{bmatrix}
    	-1 & -1 \cr
    	0 & 1
    	\end{bmatrix},
	   $$
    so
    $$
    A = M_{\xi\alpha}\cdot M_{\alpha}^{-1} = \begin{bmatrix}
    	-1 & -1 \cr
    	1 & 0
    	\end{bmatrix}
    $$
    satisfies $A\alpha = \xi \alpha$. Indeed, its characteristic polynomial is
    $$
    \chi_A(x) = \det(xI_2 -A ) = \det \begin{bmatrix}
    	x+1 & 1 \cr
    	-1 & x
    	\end{bmatrix}
     = x^2 +x +1 = \varphi_3(x).
    $$
    Moreover, one can check that $A^T \Omega A = \Omega$, so $A$ is symplectic.

    \end{enumerate}

\nocite{*}

\end{document}